\numberwithin{equation}{section}
\newtheorem{theorem}{Theorem}[section]
\newtheorem{corollary}[theorem]{Corollary}
\newtheorem{lemma}[theorem]{Lemma}
\theoremstyle{definition}
\newtheorem{assump}[theorem]{Assumption}
\theoremstyle{remark}
\newtheorem{remark}[theorem]{Remark}
\newtheorem{example}[theorem]{Example}
\newenvironment{eqn*}[0]
{\begin{equation*}}{\end{equation*}\ignorespacesafterend}
\newenvironment{dis}[0]
{\begin{equation*}}{\end{equation*}\ignorespacesafterend}
\newenvironment{eqn}[0]
{\begin{equation}}{\end{equation}\ignorespacesafterend}
\newcommand{\mb}[1]{\mathbb{#1}}
\newcommand{\mc}[1]{\mathcal{#1}}
\newcommand{\tr}[1]{\textrm{#1}}
\newcommand{\rra}[0]{\Rightarrow}
\newcommand{\what}[1]{\widehat{#1}}
\newcommand{\wtilde}[1]{\widetilde{#1}}
\newcommand{\pd}[0]{\partial}
\newcommand{\ol}[1]{\overline{#1}}
\newcommand{\vphi}[0]{\varphi}
\newcommand{\sus}[0]{\subset}
\newcommand{\vn}[0]{\boldsymbol{n}}
\newcommand{\vr}[0]{\boldsymbol{r}}
\newcommand{\vt}[0]{\boldsymbol{t}}
\newcommand{\vx}[0]{\boldsymbol{x}}
\newcommand{\zz}[0]{\boldsymbol{0}}
\newcommand*\diff{\mathop{}\!\mathrm{d}}
\newcommand{\inn}[1]{\left\langle #1\right\rangle}
\newcommand{\R}[0]{\mathbb{R}}
\newcommand{\enorm}[1]{{\left\vert\kern-0.25ex\left\vert\kern-0.25ex\left\vert #1
		\right\vert\kern-0.25ex\right\vert\kern-0.25ex\right\vert}}
\DeclareMathOperator{\diam}{diam}
\title[Immersed WG Method on Polygonal Meshes]{An Immersed Weak Galerkin Method for Elliptic Interface Problems on Polygonal Meshes}
\author{Hyeokjoo Park \and Do Y. Kwak}
\date{\today}
\keywords{immersed weak Galerkin method, elliptic interface problem, unfitted mesh, polygonal mesh}
\thanks{Department of Mathematical Sciences, Korea Advanced Institute of Science and Technology, Daejeon, 34141, Korea (hjpark235@kaist.ac.kr, kdy@kaist.ac.kr), This work is partially supported by NRF, contract No. 2021R1A2C1003340.}
\subjclass[2010]{65N12, 65N15, 65N30, 35J15}
\begin{document}

\begin{abstract}
In this paper we present an immersed weak Galerkin method for solving second-order elliptic interface problems on polygonal meshes, where the meshes do not need to be aligned with the interface. The discrete space consists of constants on each edge and broken linear polynomials satisfying the interface conditions in each element. For triangular meshes, such broken linear plynomials coincide with the basis functions in immersed finite element methods \cite{MR2740544}. We establish some approximation properties of the broken linear polynomials and the discrete weak gradient of a certain projection of the solution on polygonal meshes. We then prove an optimal error estimate of our scheme in the discrete $H^1$-seminorm under some assumptions on the exact solution. Numerical experiments are provided to confirm our theoretical analysis.
\end{abstract}

\maketitle

\section{Introduction}

There are a wide range of physical and engineering problems that are governed by partial differential equations having an interface. For example, a second-order elliptic partial differential equation with a discontinuous coefficient is often used as a model problem in material sciences and porous media involving multiple materials or media. To solve such a problem, one can use some classical numerical schemes with interface-fitted meshes, such as finite element methods (FEMs), discontinuous Galerkin (DG) methods, etc. However, it is difficult and takes a lot of time to generate such fitted meshes when the domain boundary and the interface are geometrically complicated. Even worse, when the interface is moving, one needs to generate a new fitted mesh as time evolves.

To overcome such difficulties, researchers developed and studied some numerical methods using unfitted/structured meshes, such as cut finite element methods (CutFEMs) \cite{MR1941489,MR3416285,MR2075053,MR2571349}, extended finite element methods (XFEMs) \cite{belytschko1999elastic,belytschko2003structured,MR1755971,MR2133269,MR3925464}, immersed finite element methods (IFEMs) \cite{MR2018791,MR2032402,MR2740544,MR4244918,kwak2015modified}, to name just a few. In particular, the IFEMs use basis functions that are modified so that they satisfy the interface conditions. The authors in \cite{MR2018791,MR2032402} studied IFEMs using uniform triangular or rectangular grids. In \cite{kwak2015modified,MR3338673}, the performance of the IFEMs was improved by adding penalty terms that are commonly used in DG methods. Linear and bilinear nonconforming IFEMs were studied in \cite{MR2740544,MR3936254}. The IFEM was also successfully applied to other interface problems: interface elasticity problems \cite{MR3601006}, elliptic eigenvalue inteface problems \cite{MR3573251}, Stokes interface problems \cite{MR3395904}, etc.

On the other hand, several numerical methods using general polygonal or polyhedral meshes have been developed, such as hybrid high-order (HHO) methods \cite{MR4230986,MR3259024,MR3507267}, virtual element methods (VEMs) \cite{beirao2013basic,MR3507277,MR3709049}, weak Galerkin (WG) methods (or weak Galerkin finite element methods) \cite{MR2994424,MR3223326,MR3325251}, etc. Here we explain the WG methods in some detail. In WG methods, the discrete space consists of polynomials on an element interior and polynomials on its edges, and the differential operators are replaced by the so-called weak differential operators. Compared to the classical FEMs, the WG methods have several advantages. For example, WG methods can handle the general polygonal and polyhedral meshes while the FEMs cannot. In addition, the WG methods can be generalized to higher orders directly. Due to such advantages, the WG methods were successfully applied to various problems: Darcy problems \cite{MR3223326}, Stokes equations \cite{MR3452926}, elasticity problems \cite{MR3873987}, Maxwell equations \cite{MR3394450}, etc. For more thorough survey, we refer to \cite{MR2994424,MR3325251,MR4242950,MR3853614,MR3784354,MR3775100} and references therein.

In \cite{MR3987413}, an immersed WG method was proposed for the elliptic interface problems for triangular meshes. However, their method cannot be generalized to the polygonal meshes since it is impossible to define the Lagrange-type immersed finite element interpolation on polygonal elements. Besides, the discrete bilinear form formulated in their method is different from the usual WG method; they use the usual gradient and DG-type consistency terms.

In this paper, we develop a new immersed WG method for the elliptic interface problems. Our method uses general polygonal meshes which allow the interface cut through the interior. We generalize the discrete weak gradient to the case when the coefficient is discontinuous, and use it to define the bilinear form. Our weak gradient coincides with the usual one \cite{MR3325251} when the coefficient is constant. However, they are different from each other when the coefficient is non-constant.

The rest of the paper is organized as follows. In the next section, we describe the model problem and summarize some preliminaries. In Section 3, we propose our immersed WG method for the model problem, and prove that the discrete problem is well-posed. In Section 4, we prove some technical inequalities and approximation properties of broken linear polynomials on polygonal elements. In Section 5, we derive an optimal error estimate in the discrete $H^1$-seminorm under some regularity assumptions on the exact solution. Finally, in Section 6, we present some numerical experiments that confirm our theoretical analysis.

\section{Preliminaries}

We follow the usual notation of Sobolev spaces, inner product, seminorms, and norms (see, for example, \cite{MR1930132}). Let $D$ be a bounded domain in $\R$ or $\R^2$. For $\sigma\geq 0$, we denote by $\|\cdot\|_{\sigma,D}$ and $|\cdot|_{\sigma,D}$ the usual norm and seminorm of the Sobolev space $H^{\sigma}(D)$, respectively. We also denote by $(\cdot,\cdot)_{0,D}$ the usual inner product in $L^2(D)$. We define $H^{-1/2}(D)$ as the dual space of $H^{1/2}(D)$ equipped with the norm given by
\begin{dis}
\|u\|_{-1/2,D} \coloneqq \sup_{v\in H^{1/2}(D)}\frac{\inn{u,v}_D}{\|v\|_{1/2,D}},
\end{dis}
where $\inn{\cdot,\cdot}_D$ is the duality pairing. For a nonnegative integer $k$, we denote by $\mb{P}_k(D)$ the space of all polynomials of degree $\leq k$ on $D$.

\subsection{Model problem}

Let $\Omega$ be a convex polygonal domain in $\R^2$, which is separated into two disjoint subdomains $\Omega^+$ and $\Omega^-$ by an interface $\Gamma = \pd\Omega^-\cap\pd\Omega^+$ as in \Cref{fig:domain}. Here we assume that $\Gamma$ is a regular $C^2$-curve that is not self-intersecting. For any domain $D\sus \Omega$ and any function $u:D\to\R$, we define its jump across the portion of the interface $\Gamma\cap D$ as
\begin{dis}
\left[u\right]_{\Gamma\cap D} \coloneqq u|_{D\cap\Omega^+} - u|_{D\cap\Omega^-}.
\end{dis}
We consider the following elliptic interface problem: Given $f\in L^2(\Omega)$, find $u\in H_0^1(\Omega)$ such that
\begin{eqn}\label{eqn:ModelProb}
\left\{\begin{array}{rl}
-\nabla\cdot(\beta\nabla u) = f & \tr{in $\Omega^+\cup\Omega^-$}, \\
u = 0 & \tr{on $\pd\Omega$},
\end{array}\right.
\end{eqn}
with the jump conditions on the interface
\begin{eqn}\label{eqn:ModelProbJump}
\left[u\right]_{\Gamma} = 0, \quad \left[\beta\frac{\pd u}{\pd \vn}\right]_{\Gamma} = 0,
\end{eqn}
where $\beta$ is a positive and piecewise $C^1$-function on $\ol{\Omega}$ bounded below and above by two positive constants $\beta_*$ and $\beta^*$ with $0 < \beta^- \leq \beta^+ < \infty$. That is,
\begin{dis}
\beta(\vx) = \left\{\begin{array}{ll}
\beta^+(\vx) & \tr{if $\vx\in\Omega^+$}, \\
\beta^-(\vx) & \tr{if $\vx\in\Omega^-$},
\end{array}\right.
\end{dis}
for some functions $\beta^+\in C^1(\ol{\Omega^+}),\beta^-\in C^1(\ol{\Omega^-})$ such that $\beta_* \leq \beta^s \leq \beta^*$, $s = +,-$. A weak formulation of the model problem \eqref{eqn:ModelProb}-\eqref{eqn:ModelProbJump} is written as follows: Find $u\in H_0^1(\Omega)$ such that
\begin{eqn}\label{eqn:ModelProbWeak}
\int_{\Omega}\beta\nabla u\cdot\nabla v\diff\vx = \int_{\Omega}fv\diff\vx \quad \forall v\in H_0^1(\Omega).
\end{eqn}
For any domain $D\sus\Omega$, let us introduce the space
\begin{dis}
\wtilde{H}^{2}(D) \coloneqq \left\{u\in H^1(D) :u|_{D\cap\Omega^s}\in H^{2}(D\cap\Omega^s), \ s = +,- \right\}
\end{dis}
equipped with the following norm and seminorm:
\begin{eqnarray*}
	\|u\|_{\wtilde{H}^{2}(D)}^2 & \coloneqq & \|u\|_{1,D}^2 + |u|_{2,D\cap\Omega^+}^2 + |u|_{2,D\cap\Omega^-}^2, \\
	|u|_{\wtilde{H}^{2}(D)}^2 & \coloneqq & |u|_{2,D\cap\Omega^+}^2 + |u|_{2,D\cap\Omega^-}^2.
\end{eqnarray*}
We also define
\begin{dis}
\wtilde{H}_{\Gamma}^2(D) \coloneqq \left\{u\in \wtilde{H}^2(D) : \left[\beta\frac{\pd u}{\pd \vn}\right]_{\Gamma\cap D} = 0\right\}.
\end{dis}
Then we have the following regularity theorem for the solution $u$ of the variational problem \eqref{eqn:ModelProbWeak}; see \cite{MR1431789,MR1622502}.

\begin{theorem}\label{thm:Regular}
Suppose that $f\in L^2(\Omega)$. Then the variational problem \eqref{eqn:ModelProbWeak} has a unique solution $u\in H_0^1(\Omega)\cap \wtilde{H}_{\Gamma}^2(\Omega)$ satisfying
\begin{eqn}\label{eqn:RegEst}
\|u\|_{\wtilde{H}^2(\Omega)} \leq C\|f\|_{0,\Omega}
\end{eqn}
for some generic positive constant $C$.
\end{theorem}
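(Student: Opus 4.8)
The plan is to separate the statement into the elementary well-posedness part and the substantive piecewise-$H^2$ regularity estimate. Existence and uniqueness of $u\in H_0^1(\Omega)$ follow at once from the Lax--Milgram theorem: the bilinear form $a(u,v) = \int_\Omega \beta\nabla u\cdot\nabla v\diff\vx$ is bounded and, by the lower bound $\beta\geq\beta_*>0$ together with the Poincar\'e inequality on $H_0^1(\Omega)$, coercive; and $v\mapsto (f,v)_{0,\Omega}$ is a bounded linear functional. Hence all the work is in proving the estimate \eqref{eqn:RegEst}, for which I would use a localization argument based on a finite partition of unity subordinate to a cover of $\ol{\Omega}$ by balls of three types: (i) balls contained in $\Omega^+$ or $\Omega^-$ and away from $\partial\Omega$; (ii) balls meeting $\partial\Omega$ but not $\Gamma$; and (iii) balls meeting $\Gamma$ but not $\partial\Omega$. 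Since $\Gamma = \partial\Omega^-\cap\partial\Omega^+$ lies in the interior of $\Omega$, these three families cover $\ol{\Omega}$.

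On the balls of type (i) the operator $-\nabla\cdot(\beta\nabla\cdot)$ has $C^1$ coefficients and no interface, so classical interior elliptic regularity gives $u\in H^2$ locally with the bound controlled by $\norm{f}_{0,\Omega}$ and the $C^1$-norms of $\beta^\pm$. On the balls of type (ii), since $\Omega$ is a convex polygon, the standard boundary regularity theory for second-order elliptic equations applies: convexity guarantees $H^2$ regularity up to the boundary, including at the corners, with no loss (cf.\ the theory of Grisvard). The constants depend only on $\Omega$, $\beta_*$, $\beta^*$ and the $C^1$-norms of $\beta^\pm$.

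The heart of the proof is the balls of type (iii). Fix such a ball $B$ centered at a point of $\Gamma$, with cutoff $\chi$ supported in $B$, and introduce a $C^2$ diffeomorphism $\Phi$ that straightens $\Gamma\cap B$ to a segment of the line $\{y_2 = 0\}$. The pushforward $\wtilde{u} = u\circ\Phi^{-1}$ then satisfies, on each side $\{y_2>0\}$ and $\{y_2<0\}$, a divergence-form equation $-\nabla\cdot(\wtilde{\beta}\,\wtilde{\vA}\nabla\wtilde{u}) = \wtilde{f}$ with a symmetric uniformly elliptic $C^1$ matrix $\wtilde{\vA}$ coming from the Jacobian of $\Phi$, together with the transmission conditions $[\wtilde{u}] = 0$ and continuity of the transformed conormal flux across $\{y_2=0\}$. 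I would then run the difference-quotient method of Nirenberg \emph{in the tangential direction $y_1$ only}: testing the weak form with $D_1^{-h}(\chi^2 D_1^h\wtilde{u})$, where $D_1^h$ denotes the forward difference quotient in $y_1$, gives a Caccioppoli-type estimate that is uniform in $h$, because differencing in $y_1$ is compatible with the flat interface and does not disturb the transmission condition. Passing to the limit yields $\partial_{11}\wtilde{u},\partial_{12}\wtilde{u}\in L^2(B^\pm)$ with the desired bound. The remaining pure normal derivative $\partial_{22}\wtilde{u}$ is then recovered algebraically by solving the PDE for it in terms of $\wtilde{f}$, the already-controlled derivatives $\partial_{11}\wtilde{u},\partial_{12}\wtilde{u}$ and first derivatives; this is legitimate because the coefficient of $\partial_{22}$, namely $\wtilde{\beta}\,\wtilde{A}_{22}$, stays bounded away from zero by uniform ellipticity. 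Undoing the change of variables returns $u\in H^2(B^\pm)$ with the estimate.

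Summing the local estimates against the partition of unity produces the global bound \eqref{eqn:RegEst}, and the membership $u\in\wtilde{H}_\Gamma^2(\Omega)$ follows since the transmission conditions \eqref{eqn:ModelProbJump} hold by construction of the weak solution. The main obstacle I anticipate is the bookkeeping at the interface: one must difference \emph{solely} in the tangential variable and track carefully how $\Phi$ transforms both the equation and the conormal-jump condition, so that the differenced test function respects $[\wtilde{\beta}\,\wtilde{\vA}\nabla\wtilde{u}\cdot\vn]=0$ exactly and the boundary terms on $\{y_2=0\}$ cancel. Recovering $\partial_{22}\wtilde{u}$ from the equation likewise requires the uniform positivity of $\wtilde{\beta}\,\wtilde{A}_{22}$ to be propagated through the diffeomorphism, which is where the regularity hypotheses $\beta^\pm\in C^1$ and $\Gamma\in C^2$ are genuinely used.
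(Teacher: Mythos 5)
The paper does not actually prove \Cref{thm:Regular}; it is imported from the literature via the citations \cite{MR1431789,MR1622502}, so there is no internal argument to compare yours against. Judged on its own, your sketch is the standard proof of piecewise-$H^2$ regularity for transmission problems and its structure is sound: Lax--Milgram plus Poincar\'e for well-posedness, a partition of unity separating interior, boundary, and interface neighbourhoods (legitimate here because $\Gamma$ is a closed $C^2$ curve strictly inside $\Omega$, so the three types of balls really do cover $\ol{\Omega}$), convexity of the polygon for $H^2$ regularity up to $\pd\Omega$ in the sense of Grisvard, and, near $\Gamma$, flattening by a $C^2$ diffeomorphism followed by tangential difference quotients and algebraic recovery of the normal second derivative from the equation on each side. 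Two points should be made explicit if this were written out in full. First, the test function $D_1^{-h}(\chi^2D_1^h\wtilde{u})$ lies in $H^1$ of the whole ball precisely because the difference quotient is taken tangentially to the flattened interface; consequently no separate ``respecting of the transmission condition'' is required beyond the global weak formulation on the ball --- the flux-jump condition is encoded weakly and never surfaces as a boundary term to be cancelled, so the bookkeeping you worry about is lighter than you suggest. Second, the difference-quotient step needs the transformed coefficients to have bounded tangential difference quotients on each half-ball, which is exactly where $\beta^{\pm}\in C^1(\ol{\Omega^{\pm}})$ and $\Gamma\in C^2$ are consumed, as you correctly identify. With those details filled in the argument closes, and the global bound \eqref{eqn:RegEst} follows by summing the local estimates and absorbing the lower-order terms through the energy estimate $\|u\|_{1,\Omega}\leq C\|f\|_{0,\Omega}$.
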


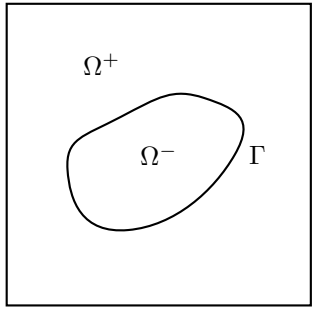
\begin{figure}
\begin{center}
\begin{tikzpicture}
\draw [thick] (-2,-2) rectangle (2,2);
\node at (0,0) {$\Omega^-$};
\node at (-0.75,1.2) {$\Omega^+$};
\node at (1.3,0) {$\Gamma$};
\draw [thick] plot [smooth cycle, tension=1] coordinates {(1,0) (0.6,0.75) (-0.5,0.5) (-1.2,-0.2) (-0.4,-1)};
\end{tikzpicture}
\vspace*{0.25\baselineskip}
\caption{A domain $\Omega$ with interface $\Gamma$.}
\label{fig:domain}
\end{center}
\end{figure}

\subsection{Mesh assumptions}

Let $\{\mc{T}_h\}_h$ be a family of decompositions (meshes) of $\Omega$ into polygonal elements $T$ with maximum diameter $h$. Let $\mc{E}_h$ be the set of all edges in $\mc{T}_h$. Let $\mc{E}_h^i$ and $\mc{E}_h^{b}$ denote the set of all interior and boundary edges in $\mc{T}_h$, respectively. For each $T\in\mc{T}_h$, let $\mc{E}_T$ be the set of all edges of $T$. For each $T\in\mc{T}_h$, we denote by $|T|$ the area of $T$, by $h_T$ the diameter of $T$, and by $\vn_T$ its exterior unit normal vector along the boundary $\pd T$. For each $e\in\mc{E}_h$, we denote by $|e|$ the length of $e$. For $e\in\mc{E}_h^i$, we define $\vn_e$  by a unit normal vector of $e$ with orientation fixed once and for all. For $e\in\mc{E}_h^b$, we define $\vn_e$ by a unit normal vector on $e$ in the outward direction with respect to $\Omega$.

We call an element $T\in\mc{T}_h$ an interface element if the interface $\Gamma$ passes through the interior of $T$; otherwise we call $T$ a noninterface element. We denote by $\mc{T}_h^I$ the collection of all interface elements in $\mc{T}_h$, and by $\mc{T}_h^N$ the collection of all non-interface elements in $\mc{T}_h$. For an interface element $T\in\mc{T}_h$, we denote by $\Gamma_h^T$ the line segment connecting the intersections of $\Gamma$ and the edges of $T$. This line segment divides $T$ into two parts $T^{+}$ and $T^{-}$ with $\ol{T} = \ol{T^{+} \cup T^{-}}$ (see, for example, \Cref{fig:InterfaceElt}). For any function $u:T\to\R$, we define its jump across $\Gamma_h^T\cap T$ as
\begin{dis}
\left[u\right]_{\Gamma_h^T} \coloneqq u|_{T^+} - u|_{T^-}.
\end{dis}

We assume that $\{\mc{T}_h\}_h$ satisfies the following regularity assumptions \cite{beirao2013basic,MR3223326,MR2740544}.

\begin{assump}\label{assump:MeshRegular}
There exists $\rho > 0$ independent of $h$ such that
\begin{enumerate}[label=(\roman*)]
\item the decomposition $\mc{T}_h$ consists of a finite number of nonoverlapping polygonal elements;
\item for any $T\in\mc{T}_h$ the diameter of any edge of $T$ is larger than $\rho h_T$;
\item every element $T$ of $\mc{T}_h$ is star-shaped with respect to a ball $B_T$ with center $\vx_T$ and radius $\rho h_T$;
\item if $e$ is an edge of $T\in\mc{T}_h$ then $|e| \geq \rho h_T$;
\item the interface $\Gamma$ meets the edges of an interface element at no more than two points;
\item the interface $\Gamma$ meets each edge in $\mc{E}_h$ at most once, except possibly it passes through two vertices.
\end{enumerate}
\end{assump}

\begin{remark}\label{remark:MeshRegular}
The assumptions (v) and (vi) are resonable if $h$ is sufficiently small. Note also that the assumptions (i)-(iv) imply that the following properties \cite{MR3709049}:
\begin{itemize}
	\item Every $T\in\mc{T}_h$ has at most $N$ edges and vertices, where $N$ is independent of $h$.
	\item Each element $T\in\mc{T}_h$ can be decomposed as $N$ triangles, obtained by connecting the vertices of $T$ to $\vx_T$, such that the minimum angle of the triangles is controlled by $\rho$.
\end{itemize}
\end{remark}

Throughout this paper, $C$ will denote a generic positive constant independent of $h$, not necessarily the same in each occurrence.

\begin{figure}
\centering
\begin{subfigure}{0.4\textwidth}
\centering
\begin{tikzpicture}
\draw [thick] (2,0) -- (1,1.732) -- (-1,1.732) -- (-2,0) -- (-1,-1.732) -- (1,-1.732) -- (2,0);
\draw [thick] plot [smooth, tension=1] coordinates { (-1.5,0.866) (-0.75,1) (0.5,0.5) (2 - 0.3,0.3*1.732) };
\node [below] at (0.5,0.5) {$\Gamma$};
\node at (0,-0.5) {$T\cap\Omega^-$};
\node [above] at (0,1) {$T\cap\Omega^+$};
\end{tikzpicture}
\end{subfigure}
\begin{subfigure}{0.4\textwidth}
\centering
\begin{tikzpicture}
\draw [thick] (2,0) -- (1,1.732) -- (-1,1.732) -- (-2,0) -- (-1,-1.732) -- (1,-1.732) -- (2,0);
\draw [thick, dashed] plot [smooth, tension=1] coordinates { (-1.5,0.866) (-0.75,1) (0.5,0.5) (2 - 0.3,0.3*1.732) };
\node [below] at (1,0.4) {$\Gamma$};
\draw [thick] (-1.5,0.866) -- (2 - 0.3,0.3*1.732);
\node [below] at (-1,0.75) {$\Gamma_h^T$};
\node at (0,-1) {$T^{-}$};
\node [above] at (0.75,1) {$T^{+}$};
\end{tikzpicture}
\end{subfigure}
\caption{An interface element $T$ in $\mc{T}_h$.}
\label{fig:InterfaceElt}
\end{figure}
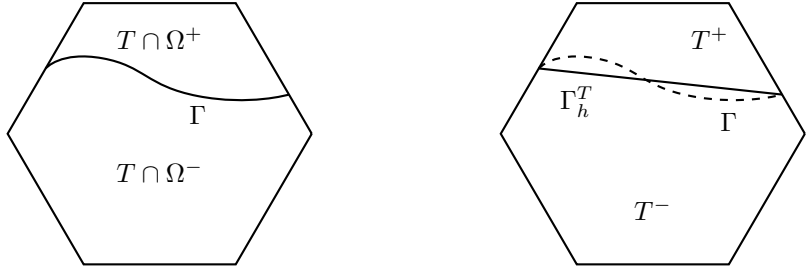

\section{Immersed Weak Galerkin Method}

In this section, we describe an immersed WG method for the problem \eqref{eqn:ModelProbWeak}.

\subsection{Broken polynomial space}

Let $T\in\mc{T}_h$ be an interface element. We define the piecewise constant function $\ol{\beta}_T$ on the element $T$ as follows:
\begin{dis}
	\ol{\beta}_T(\vx) = \left\{\begin{array}{ll}
		\ol{\beta}^+ & \tr{if $\vx\in T^+$}, \\
		\ol{\beta}^- & \tr{if $\vx\in T^-$},
	\end{array}\right.
\end{dis}
where $\ol{\beta}^s \coloneqq \beta^s(\vx^s)$ and $\vx^{s}$ denotes the barycenter of $T^s$ for $s = +,-$. We also let $\ol{\beta}$ be the piecewise constant function such that $\ol{\beta}|_T = \ol{\beta}_T$ on each $T\in\mc{T}_h$. The broken polynomial space $\what{\mb{P}}_1(T)$ of degree $\leq 1$ is defined by
\begin{dis}
\what{\mb{P}}_1(T) \coloneqq \left\{q : q|_{T^+}\in \mb{P}_1(T^+), \ q|_{T^-}\in \mb{P}_1(T^-), \ \left[q\right]_{\Gamma_h^T} = 0, \ \left[\ol{\beta}_T\frac{\pd q}{\pd \vn}\right]_{\Gamma_h^T} = 0\right\}.
\end{dis}
It is easy to see that $\dim\what{\mb{P}}_1(T) = 3$ (see, for example, \cite[Theorem 2.2]{MR2740544}), and the following piecewise polynomials form a basis of $\what{\mb{P}}_1(T)$:
\begin{dis}
\vphi_1(\vx) = 1, \quad \vphi_{2}(\vx) = \vt\cdot(\vx - \vx_0), \quad \vphi_{3}(\vx) = \ol{\beta}_T^{-1}\vn\cdot(\vx - \vx_0),
\end{dis}
where $\vx_0$ is the midpoint of the line segment $\Gamma_h^T$, $\vn = (n_1,n_2)$ is a unit vector normal to $\Gamma_h^T$ pointing from $T^+$ to $T^-$, and $\vt = (-n_2,n_1)$. Note that, since $\what{\mb{P}}_1(T)\sus H^1(T)$, the space $\nabla\what{\mb{P}}_1(T)$ is well-defined, and the vector-valued functions $\nabla\vphi_2$ and $\nabla\vphi_3$ form a basis of $\nabla\what{\mb{P}}_1(T)$.

For convenience, we set $\what{\mb{P}}_1(T) \coloneqq \mb{P}_1(T)$ for any non-interface element $T\in\mc{T}_h$.  Let
\begin{dis}
\what{\mb{P}}_1(\Omega) := \big\{q\in L^2(\Omega) : q|_T\in \what{\mb{P}}_1(T) \ \forall T\in\mc{T}_h\big\}.
\end{dis}

\subsection{Weak Galerkin finite element space}

We define the weak Galerkin finite element space $V_h$ associated to $\mc{T}_h$ and its subspace $V_{h,0}$ as follows:
\begin{eqnarray*}
V_h & \coloneqq & \left\{v = \{v_0,v_{\pd}\} : v_0|_T\in\what{\mb{P}}_1(T) \ \forall T\in\mc{T}_h, \ v_{\pd}|_e\in\mb{P}_0(e) \ \forall e\in\mc{E}_h\right\}, \\
V_{h,0} & \coloneqq & \left\{v\in V_h : v_{\pd} = 0 \ \tr{on} \ \pd\Omega\right\}.
\end{eqnarray*}
Here we note that, for any $v = \{v_0,v_{\pd}\}\in V_h$, its second component $v_{\pd}$ is a single-valued function on each edge $e\in\mc{E}_h$. Thus, the space $V_h$ has $3$ degrees of freedom on the interior of each element $T\in\mc{T}_h$ and $1$ degree of freedom on each edge $e\in\mc{E}_h$.

For each element $T\in\mc{T}_h$, let $Q_0$ be the $L^2$-projection from $L^2(T)$ onto $\what{\mb{P}}_1(T)$. Similarly, for each edge $e\in\mc{E}_h$, let $Q_{\pd}$ the $L^2$-projection from $L^2(e)$ onto $\mb{P}_0(e)$. We then define a projection operator $Q_h:H^1(\Omega)\to V_h$ by
\begin{eqn}\label{eqn:QhDef}
Q_hv = \{Q_0v,Q_{\pd}v\}, \quad v\in H^1(\Omega).
\end{eqn}

\subsection{Discrete problem and well-posedness}

For each $v_h = \{v_0,v_{\pd}\}\in V_h$, we define a discrete weak gradient $\nabla_wv_h$ of $v_h$ as a vector-valued function satisfying $\nabla_wv|_T\in \nabla\what{\mb{P}}_1(T)$ and
\begin{eqn}
\int_T\ol{\beta}_T\nabla_wv_h \cdot \nabla q\diff\vx = \int_T\ol{\beta}_T\nabla v_0\cdot\nabla q\diff\vx - \int_{\pd T}(Q_{\pd}v_0 - v_{\pd})\left(\ol{\beta}_T\nabla q\cdot\vn_T\right)\diff s \quad \forall q\in \what{\mb{P}}_1(T), \label{eqn:WeakGradDef}
\end{eqn}
for each element $T\in\mc{T}_h$.

We next introduce two bilinear forms on $V_h\times V_h$ as follows:
\begin{eqnarray*}
a(u_h,v_h) & \coloneqq & \sum_{T\in\mc{T}_h}\int_T\ol{\beta}_T\nabla_wu_h\cdot\nabla_wv_h\diff\vx, \\
s(u_h,v_h) & \coloneqq & \lambda\sum_{T\in\mc{T}_h}h_T^{-1}\int_{\pd T}(Q_{\pd}u_0 - u_{\pd})(Q_{\pd}v_0 - v_{\pd})\diff s,
\end{eqnarray*}
for any $u_h = \{u_0,u_{\pd}\}\in V_h$ and $v_h = \{v_0,v_{\pd}\}\in V_h$, where $\lambda$ is an arbitrary positive constant. The stabilization $a_s(\cdot,\cdot)$ of $a(\cdot,\cdot)$ is defined by
\begin{dis}
a_s(u_h,v_h) = a(u_h,v_h) + s(u_h,v_h) \quad \forall u_h,v_h\in V_h.
\end{dis}

We are now ready to formulate the immersed WG method for solving \eqref{eqn:ModelProbWeak} as follows: Find $u_h\in V_{h,0}$ such that
\begin{eqn}\label{eqn:DProb}
a_s(u_h,v_h) = (f,v_0)_{0,\Omega}, \quad \forall v_h = \{v_0,v_{\pd}\}\in V_{h,0}.
\end{eqn}

We next analyze the well-posedness of the discrete problem \eqref{eqn:DProb}. Define the energy-norm $\enorm{\cdot}$ by
\begin{dis}
\enorm{v_h} \coloneqq \sqrt{a_s(v_h,v_h)} \quad \forall v_h\in V_h.
\end{dis}
Clearly $\enorm{\cdot}$ is a seminorm on $V_h$. Moreover, $\enorm{\cdot}$ is a norm on $V_{h,0}$, as shown in the following lemma.

\begin{lemma}\label{lem:EnergyNormVh}
$\enorm{\cdot}$ is a norm on $V_{h,0}$.
\end{lemma}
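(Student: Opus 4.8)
The plan is to exploit the fact that $\enorm{\cdot}$ is already known to be a seminorm on $V_h$, so that only definiteness needs to be verified on $V_{h,0}$: I must show that if $v_h = \{v_0,v_{\pd}\}\in V_{h,0}$ satisfies $\enorm{v_h} = 0$, then $v_h = 0$. Since $\enorm{v_h}^2 = a(v_h,v_h) + s(v_h,v_h)$ and both summands are manifestly nonnegative (the first because $\ol{\beta}_T\geq\beta_* > 0$, the second because $\lambda > 0$), each must vanish separately.

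First I would extract the consequences of $s(v_h,v_h) = 0$. As $\lambda > 0$ and each boundary integral is nonnegative, this forces $Q_{\pd}v_0 = v_{\pd}$ on $\pd T$ for every $T\in\mc{T}_h$; equivalently, $v_{\pd} = Q_{\pd}v_0$ on each edge $e\in\mc{E}_h$. Simultaneously, $a(v_h,v_h) = 0$ together with $\ol{\beta}_T\geq\beta_* > 0$ yields $\nabla_wv_h = 0$ on each $T$.

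Next I would feed these two facts into the defining relation \eqref{eqn:WeakGradDef}. Because $Q_{\pd}v_0 - v_{\pd} = 0$ on $\pd T$, the boundary term drops out, and since $\nabla_wv_h = 0$, the relation collapses to $\int_T\ol{\beta}_T\nabla v_0\cdot\nabla q\diff\vx = 0$ for all $q\in\what{\mb{P}}_1(T)$. Choosing $q = v_0\in\what{\mb{P}}_1(T)$ and using $\ol{\beta}_T\geq\beta_* > 0$ gives $\nabla v_0 = 0$ a.e. on $T$. On a non-interface element this immediately makes $v_0$ constant; on an interface element it makes $v_0|_{T^+}$ and $v_0|_{T^-}$ separately constant, and the built-in continuity condition $\left[v_0\right]_{\Gamma_h^T} = 0$ in $\what{\mb{P}}_1(T)$ forces these two constants to agree. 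Hence $v_0\equiv c_T$ is a single constant on each $T$.

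The final step is a connectivity (stitching) argument, which I regard as the only part needing a little care. Since $v_0|_T = c_T$ is constant, $Q_{\pd}v_0 = c_T$ on each edge of $T$, so by the previous step $v_{\pd} = c_T$ on every $e\in\mc{E}_T$. For an interior edge $e$ shared by $T_1$ and $T_2$, single-valuedness of $v_{\pd}$ gives $c_{T_1} = c_{T_2}$; as the mesh is connected, all the constants $c_T$ coincide with one global value $c$. Finally, any element $T$ with a boundary edge $e\in\mc{E}_h^b$ satisfies $v_{\pd}|_e = 0$ because $v_h\in V_{h,0}$, whence $c = c_T = 0$. Therefore $v_0 = 0$ and $v_{\pd} = 0$, i.e. $v_h = 0$, completing the proof. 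The main (though mild) obstacle is ensuring the interface-element reduction from $\nabla v_0 = 0$ to a single constant via the jump condition, and organizing the edge-by-edge propagation of the constants down to the Dirichlet boundary.
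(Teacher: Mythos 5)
Your proposal is correct and follows essentially the same route as the paper's proof: extract $\nabla_w v_h = 0$ and $Q_{\pd}v_0 = v_{\pd}$ from the vanishing of the two nonnegative parts, test the weak-gradient identity \eqref{eqn:WeakGradDef} with $q = v_0$ to get $\nabla v_0 = 0$, conclude $v_0$ is constant elementwise (using the continuity built into $\what{\mb{P}}_1(T)$ on interface elements), and propagate the constants through the single-valued edge unknowns to the Dirichlet boundary. You merely spell out the connectivity/stitching step more explicitly than the paper does; no gap.
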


\begin{proof}
It suffices to show that $\enorm{v_h} = 0$ $\rra$ $v_h \equiv 0$ for any $v_h\in V_{h,0}$. Suppose that $v_h = \{v_0,v_{\pd}\}\in V_{h,0}$ satisfies $\enorm{v_h} = 0$. Since
\begin{dis}
0 = \enorm{v_h}^2 = \sum_{T\in\mc{T}_h}\int_T\ol{\beta}_T|\nabla_wv_h|^2\diff\vx + \lambda\sum_{T\in\mc{T}_h}\sum_{e\sus\pd T}h_T^{-1}\int_e|Q_{\pd}v_0 - v_{\pd}|^2\diff s
\end{dis}
and since $0 < \beta_* < \ol{\beta}_T$ for any $T\in\mc{T}_h$, we obtain $\nabla_wv_h \equiv 0$ and $Q_{\pd}v_0 = v_{\pd}$ on each edge $e \in \mc{E}_h$. Then
\begin{eqnarray*}
0 & = & \int_T\ol{\beta}_T\nabla_wv_h\cdot\nabla v_0\diff\vx = \int_T\ol{\beta}_T\nabla v_0\cdot\nabla v_0\diff\vx + \sum_{e\sus\pd T}\int_e(v_{\pd} - Q_{\pd}v_0)\left(\ol{\beta}_T\frac{\pd v_0}{\pd \vn}\right)\diff s \\
& = & \int_T\ol{\beta}_T|\nabla v_0|^2\diff\vx \geq \int_T\beta_*|\nabla v_0|^2\diff\vx
\end{eqnarray*}
for any $T\in\mc{T}_h$. This shows that $\nabla v_0 = 0$ on each $T\in\mc{T}_h$. Note that, for each $T\in\mc{T}_h$, $\nabla q = 0$ implies $q = \tr{constant}$ for any $q\in\what{\mb{P}}_1(T)$. Since $v_0\in \what{\mb{P}}_1(T)$ on each $T\in\mc{T}_h$, we obtain that $v_0$ is constant on each $T\in\mc{T}_h$. Since $Q_{\pd}v_0 = v_{\pd}$ on each $e\in\mc{E}_h$ and $v_{\pd} = 0$ on $\pd\Omega$, we conclude that $v_0 = v_{\pd} = 0$.
\end{proof}

The well-posedness of the discrete problem \eqref{eqn:DProb} directly follows from the lemma.

\begin{corollary}
The discrete problem \eqref{eqn:DProb} is well-posed.
\end{corollary}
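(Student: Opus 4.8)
The plan is to recognize \eqref{eqn:DProb} as a finite-dimensional symmetric variational problem and invoke the Lax--Milgram lemma, with coercivity supplied essentially for free by \Cref{lem:EnergyNormVh}. First I would note that $V_{h,0}$ is finite-dimensional: by \Cref{remark:MeshRegular} each element carries at most $N$ edges and vertices, the mesh consists of finitely many elements, and each element contributes three interior degrees of freedom while each edge contributes one, so $\dim V_{h,0} < \infty$. The bilinear form $a_s(\cdot,\cdot)$ is manifestly symmetric and bilinear on $V_{h,0}\times V_{h,0}$, being a sum of $L^2$-type inner products of discrete weak gradients together with the weighted edge-jump terms in $s(\cdot,\cdot)$.

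Next I would verify the hypotheses of Lax--Milgram relative to the energy norm $\enorm{\cdot}$. Coercivity is immediate, and with constant one, since $a_s(v_h,v_h) = \enorm{v_h}^2$ by definition; crucially, \Cref{lem:EnergyNormVh} guarantees that $\enorm{\cdot}$ is a genuine norm on $V_{h,0}$, so $a_s$ is positive definite there and in fact defines an inner product. Continuity of $a_s$ then follows at once from the Cauchy--Schwarz inequality for this inner product, giving $|a_s(u_h,v_h)| \leq \enorm{u_h}\,\enorm{v_h}$. Finally, the right-hand side $v_h = \{v_0,v_{\pd}\} \mapsto (f,v_0)_{0,\Omega}$ is a linear functional on $V_{h,0}$; since $V_{h,0}$ is finite-dimensional and $\enorm{\cdot}$ is a norm on it, every linear functional is bounded with respect to $\enorm{\cdot}$, so this functional is continuous.

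With symmetry, coercivity, and continuity in hand, the Lax--Milgram lemma --- equivalently, the Riesz representation theorem applied in the inner-product space $(V_{h,0},a_s)$ --- yields a unique $u_h \in V_{h,0}$ solving \eqref{eqn:DProb}, together with a stability bound of the form $\enorm{u_h} \leq C\|f\|_{0,\Omega}$ obtained by taking $v_h = u_h$ and estimating $(f,u_0)_{0,\Omega}$. Since the substantive part of the argument --- showing that $\enorm{\cdot}$ separates points --- has already been carried out in \Cref{lem:EnergyNormVh}, there is essentially no remaining obstacle. The only point meriting a word of care is the boundedness of the load functional in the energy norm: at fixed $h$ this is automatic from finite-dimensionality, whereas an $h$-uniform stability constant would instead require a discrete Poincar\'e inequality controlling $\|v_0\|_{0,\Omega}$ by $\enorm{v_h}$, which is not needed for well-posedness alone.
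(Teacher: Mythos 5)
Your proposal is correct and follows essentially the same route as the paper: the paper's proof likewise invokes \Cref{lem:EnergyNormVh} to get coercivity and continuity of $a_s(\cdot,\cdot)$ with respect to $\enorm{\cdot}$ on $V_{h,0}$ and then concludes by Lax--Milgram. Your added remarks on finite-dimensionality of $V_{h,0}$ and on the ($h$-dependent) boundedness of the load functional are sensible elaborations of details the paper leaves implicit.
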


\begin{proof}
From \Cref{lem:EnergyNormVh}, the bilinear form $a_s(\cdot,\cdot)$ on $V_{h,0}$ is coercive and continuous with respect to the norm $\enorm{\cdot}$ on $V_{h,0}$. The conclusion follows from the Lax-Milgram Lemma.
\end{proof}

\section{Some Estimates on Interface Elements}

In this section, we present some inequalities for the function spaces on the interface elements, which are needed for the error analysis of the immersed WG method.

\subsection{Geometric assumptions on interface elements}\label{subsec:GeoAssumpInterface}

Let $T\in\mc{T}_h$ be an interface element. Recall that $\Gamma_h^T$ denotes the line segment connecting two intersection points of $\Gamma$ and the edges of $T$. Although the analysis works for $C^2$-interface, we assume for the simplicity of presentation, that on each mesh element $T$, the portion $\Gamma\cap T$ is a line segment so that $\Gamma\cap T = \Gamma_h^T$ and $T^s = T\cap\Omega^s$ for $s = +,-$. In addition, we assume that $\Gamma\cap T$ aligns with the $x$-axis and the origin of the $xy$-plane is contained in $T$, so that
\begin{eqn}\label{eqn:TpTnSet}
T^+ = T \cap \{(x_1,x_2)\in\R^2 : x_2\geq 0\}, \quad T^- = T \cap \{(x_1,x_2)\in\R^2 : x_2 \leq 0\}
\end{eqn}
(see \Cref{fig:GeoAssump}). Since $h_T = \diam(T)$, we have $T\sus [-h_T,h_T]^2$. Since $\beta^s \in C^1(\ol{\Omega^s})$ and $\ol{\beta}_T = \ol{\beta}^s$ on $T^s$ for $s = +,-$, we have
\begin{eqn}\label{eqn:BetaApprox}
	\max_{\vx\in T^s}|\beta(\vx) - \ol{\beta}_T(\vx)| \leq Ch_T, \quad \max_{\vx\in e\cap\Omega^s}|\beta(\vx) - \ol{\beta}_T(\vx)| \leq Ch_T, \quad s = +,-,
\end{eqn}
where $e\sus \pd T$. Let $\vn_{\Gamma} = (n_{1,h},n_{2,h})$ be the unit vector normal to $\Gamma$ pointing from $T^+$ to $T^-$, and let $\vt_{\Gamma} = (-n_{2,h}, n_{1,h})$.

\begin{remark}
We briefly discuss the case when the interface is not piecewise linear, that is, $\Gamma\cap T \neq \Gamma_h^T$. Without loss of generality we assume that $\Gamma_h^T$ aligns with the $x$-axis and $T$ is contained in the box $I_x\times I_y$, where $I_x$ and $I_y$ are intervals with length not greater than $2h_T$. Since $\Gamma$ is a regular $C^2$-curve, there exists a parametrization $t\mapsto (t,\gamma(t))$ of the curve $\Gamma\cap T$ for some $\gamma\in C^2(I_x)$, when $h$ is sufficiently small. Then the unit normal vector $\vn_{\Gamma}$ along $\Gamma\cap T$ pointing from $\Omega^+$ to $\Omega^-$ is 
\begin{dis}
\vn_{\Gamma}(t,\gamma(t)) = \left(\frac{\gamma'(t)}{(1+|\gamma'(t)|^2)^{1/2}}, \frac{-1}{(1+|\gamma'(t)|^2)^{1/2}}\right), \quad t\in I_x.
\end{dis}
Let us extend the vector-valued function $\vn_{\Gamma}$ to the box $I_x\times I_y$ by setting $(t,y)\mapsto \vn_{\Gamma}(t,\gamma(t))$. Then, since $\gamma$ is $C^2$, we have
\begin{eqn}\label{eqn:GeneralCaseNormalApprox}
\sup_{\vx\in T}\big|\vn_{\Gamma}(\vx) - \vn_{\Gamma}^h\big| \leq Ch_T,
\end{eqn}
where $\vn_{\Gamma}^h$ is the unit normal vector along $\Gamma_h^T$ pointing from $T^+$ to $T^-$. In addition, one can obtain a similar result for the tangential vector of $\Gamma\cap T$. Next, according to Lemma 2 in \cite{MR1242055},
\begin{eqn}\label{eqn:GeneralCaseL2onDiffSet}
\|\nabla u\|_{0,T_r}^2 \leq Ch_T^2\sum_{s=+,-}\left(\|(\nabla u)|_{\Omega_s}\|_{0,\Gamma\cap T}^2 + h_T^2|\nabla u|_{1,T\cap\Omega^s}^2\right), \quad \forall u\in \wtilde{H}^2(T),
\end{eqn}
where $T_r$ is a subset of $T$ given by
\begin{dis}
T_r = T - (\Omega^+\cap T^+) - (\Omega^-\cap T^-);
\end{dis}
see \Cref{fig:InterfaceElt}. Note also that the first estimate in \eqref{eqn:BetaApprox} is modified as follows:
\begin{eqn}\label{eqn:GeneralCaseBetaApprox}
\sup_{\vx\in T^s\cap(T\cap\Omega^s)}|\beta(\vx) - \ol{\beta}_T(\vx)| \leq Ch_T, \quad s = +,-.
\end{eqn}
Using the estimates \eqref{eqn:GeneralCaseNormalApprox}-\eqref{eqn:GeneralCaseBetaApprox} and the standard trace inequality, all the results below can be derived with only minor modification. We leave the detailed analysis for a future investigation.
\end{remark}

\begin{lemma}\label{lem:StarShaped}
If $h$ is sufficiently small, then either $T^+$ or $T^-$ contains a ball with radius $\rho h_T/8$.
\end{lemma}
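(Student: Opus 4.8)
The plan is to exploit the star-shapedness ball $B_T$ of radius $\rho h_T$ guaranteed by \Cref{assump:MeshRegular}(iii), together with the elementary observation that this radius is comparable to $h_T$ while the interface $\Gamma\cap T=\Gamma_h^T$ merely slices $T$ along the line $\{x_2=0\}$. Writing $\vx_T=(a,b)$ for the center of $B_T$ and recalling that $B_T\sus T$, the crucial point is that
\begin{dis}
B_T\cap\{x_2\geq 0\}\sus T\cap\{x_2\geq 0\}=T^+,\qquad B_T\cap\{x_2\leq 0\}\sus T\cap\{x_2\leq 0\}=T^-,
\end{dis}
by \eqref{eqn:TpTnSet}. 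Hence it suffices to inscribe a ball of radius $\rho h_T/8$ in one of the two half-disks obtained by cutting $B_T$ with the $x$-axis. I would choose the half lying on the same side as the center, which by symmetry (reflecting in $x_2\mapsto -x_2$ exchanges $T^+$ and $T^-$) may be assumed to satisfy $b\geq 0$.

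Next I would make the inscribed ball explicit. If $b\geq\rho h_T$, then $B_T$ already lies entirely in $\{x_2\geq 0\}$, so $B_T\sus T^+$ and nothing more is needed. If $0\leq b\leq\rho h_T$, I would take $B'\coloneqq B\big((a,\rho h_T/2),\rho h_T/2\big)$, which is tangent to the $x$-axis from above and hence satisfies $B'\sus\{x_2\geq 0\}$. The inclusion $B'\sus B_T$ follows because the two centers are collinear with the $x_2$-direction, so the point of $B'$ farthest from $\vx_T$ lies at distance $|b-\rho h_T/2|+\rho h_T/2$, and a short computation using $0\leq b\leq\rho h_T$ shows this is $\leq\rho h_T$. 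Combining the two inclusions gives $B'\sus B_T\cap\{x_2\geq 0\}\sus T^+$, exhibiting a ball of radius $\rho h_T/2\geq\rho h_T/8$ inside $T^+$, which settles the claim in the present piecewise-linear setting.

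I expect the only genuinely delicate point — and the reason both for the hypothesis that $h$ be sufficiently small and for the deliberately non-sharp constant $1/8$ — to be the passage to a curved $C^2$ interface, where $T^s=T\cap\Omega^s$ is bounded by $\Gamma\cap T$ rather than by the chord $\Gamma_h^T$. Since $\Gamma$ is $C^2$, the true interface deviates from $\Gamma_h^T$ by at most $Ch_T^2$ on $T$ (cf. \eqref{eqn:GeneralCaseNormalApprox}), which for $h$ small is negligible compared with $\rho h_T$; one may then shift the center of $B'$ upward by $Ch_T^2$ and shrink its radius by the same amount to keep it on the correct side of $\Gamma\cap T$ while remaining inside $B_T$, at the cost only of lowering the radius from $\rho h_T/2$ to a smaller multiple of $h_T$. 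The slack between $1/2$ and $1/8$ comfortably absorbs this $O(h_T^2)$ correction, so no sharp tracking of constants is required.
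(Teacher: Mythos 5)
Your proof is correct and follows essentially the same route as the paper's: both cut the star-shapedness ball $B_T\subset T$ by the line supporting $\Gamma_h^T$ and inscribe a ball of radius comparable to $\rho h_T$ in the half-disk on the same side as the center. Your tangent-ball construction even yields the sharper radius $\rho h_T/2$ (versus the paper's $\rho h_T/8$), and your closing remark about the curved-interface case is consistent with the paper's simplifying assumption that $\Gamma\cap T=\Gamma_h^T$.
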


\begin{proof}
Recall that $T$ is star-shaped with respect to a ball $B$ centered at $\vx_T = (x_T,y_T)$ with radius $\rho h_T$. First, assume that $|y_T| \leq \rho h_T/8$. Consider the ball $B^+$ centered at $(x_T,y_T+\rho h_T/2)$ with radius $\rho h_T/8$. Then $B^+ \sus B\cap T^+$.

One can show that, by the same argument, for the case $y_T \geq \rho h_T/8$ the set $T^+$ contains the ball centered at $(x_T,y_T+\rho h_T/2)$ with radius $\rho h_T/8$, and for the case $y_T \leq -\rho h_T/8$ the set $T^-$ contains the ball centered at $(x_T,y_T-\rho h_T/2)$ with radius $\rho h_T/8$.
\end{proof}

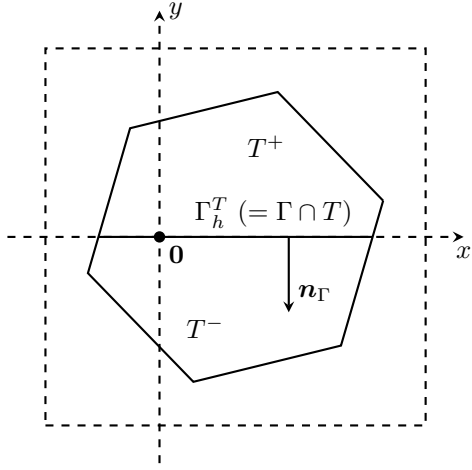
\begin{figure}
\begin{center}
\begin{tikzpicture}
\draw [thick, dashed] (-2.5,-2.5) rectangle (2.5,2.5);
\node [above] at (0.5,0) {$\Gamma_h^T$ ($=\Gamma\cap T$)};
\node at (0.4,1.2) {$T^{+}$};
\node at (-0.4,-1.2) {$T^{-}$};
\begin{scope}[rotate=13.8979]
\draw [thick] (2,0) -- (1,1.732) -- (-1,1.732) -- (-2,0) -- (-1,-1.732) -- (1,-1.732) -- (2,0);
\draw [thick] (-2+0.25,0.25*1.732) -- (2-0.25,-0.25*1.732);
\end{scope}
\fill (-1,0) circle [radius = 0.075];
\node [below right] at (-1,0) {$\zz$};
\draw [thick, dashed, -{stealth}] (-3,0) -- (3,0);
\node [below] at (3,0) {$x$};
\draw [thick, dashed, -{stealth}] (-1,-3) -- (-1,3);
\node [right] at (-1,3) {$y$};
\draw [thick, -{stealth}] (0.7,0) -- (0.7,-1);
\node [right] at (0.7,-0.75) {$\vn_{\Gamma}$};
\end{tikzpicture}
\caption{Geometric assumptions on an interface element $T$.}
\label{fig:GeoAssump}
\end{center}
\end{figure}

\subsection{Some inequalities for the broken polynomial space $\what{\mb{P}}_1$}

Recall that, on each element $T\in\mc{T}_h$, the standard trace inequality holds:
\begin{equation}\label{eqn:TraceIneq}
h_T^{1/2}\|v\|_{0,\pd T} \leq C\left(\|v\|_{0,T} + h_T\|\nabla v\|_{0,T}\right) \quad \forall v\in H^1(T).
\end{equation}
The following lemma provides a trace inequality for the space $\nabla\what{\mb{P}}_1$.

\begin{lemma}\label{lem:BrokenTraceIneq}
Let $T\in\mc{T}_h$ be an interface element. Then there exists a positive constant $C$ depending only on $\rho$ and $\beta$ such that for any $q\in \what{\mb{P}}_1(T)$ and any edge $e$ of $T$,
\begin{eqn}\label{eqn:BrokenTraceIneq}
\left\|\ol{\beta}_T\nabla q\right\|_{0,e} \leq Ch_T^{-1/2}\big\|\ol{\beta}_T^{1/2}\nabla q\big\|_{0,T},
\end{eqn}
\end{lemma}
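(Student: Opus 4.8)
The plan is to reduce everything to the \emph{piecewise constant} structure of $\ol{\beta}_T\nabla q$. Writing $q = c_1\vphi_1 + c_2\vphi_2 + c_3\vphi_3$ in the given basis of $\what{\mb{P}}_1(T)$, a direct computation gives $\nabla q = c_2\vt + c_3\ol{\beta}_T^{-1}\vn$, and hence $\ol{\beta}_T\nabla q = c_3\vn + c_2\ol{\beta}_T\vt$. Thus $\ol{\beta}_T\nabla q$ is constant on each of $T^+$ and $T^-$, with values $\vV^{+} := c_3\vn + c_2\ol{\beta}^{+}\vt$ and $\vV^{-} := c_3\vn + c_2\ol{\beta}^{-}\vt$; since $\vn\perp\vt$ are unit vectors, $|\vV^{\pm}|^2 = c_3^2 + (\ol{\beta}^{\pm})^2c_2^2$. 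Both sides of \eqref{eqn:BrokenTraceIneq} then become elementary: the left-hand side is $|\vV^+|^2|e\cap T^+| + |\vV^-|^2|e\cap T^-| \le (|\vV^+|^2 + |\vV^-|^2)\,|e|$ with $|e|\le h_T$, while $\|\ol{\beta}_T^{1/2}\nabla q\|_{0,T}^2 = \frac{1}{\ol{\beta}^+}|\vV^+|^2|T^+| + \frac{1}{\ol{\beta}^-}|\vV^-|^2|T^-|$.

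First I would record a comparability estimate between the two constant values. The $\vn$-component of both $\vV^+$ and $\vV^-$ equals $c_3$ (this is exactly the conormal continuity $\bigl[\ol{\beta}_T\,\pd q/\pd\vn\bigr]_{\Gamma_h^T}=0$ built into $\what{\mb{P}}_1(T)$), and their tangential components differ only by the factor $\ol{\beta}^{\pm}$. Using $\beta_*\le\ol{\beta}^{\pm}\le\beta^*$ in $|\vV^{s}|^2 = c_3^2 + (\ol{\beta}^{s})^2c_2^2$ gives $|\vV^{s}|^2 \le (\beta^*/\beta_*)^2\,|\vV^{s'}|^2$ for $\{s,s'\}=\{+,-\}$; in particular $|\vV^+|^2$ and $|\vV^-|^2$ are each bounded by $(\beta^*/\beta_*)^2$ times the value on whichever piece I choose.

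Next I would invoke \Cref{lem:StarShaped}: for $h$ small one of the pieces, say $T^{s_0}$, contains a ball of radius $\rho h_T/8$, so $|T^{s_0}|\ge \pi(\rho h_T/8)^2$. Retaining only the $T^{s_0}$-contribution on the right then gives $\|\ol{\beta}_T^{1/2}\nabla q\|_{0,T}^2 \ge \frac{1}{\beta^*}|\vV^{s_0}|^2|T^{s_0}| \ge \frac{\pi\rho^2}{64\beta^*}|\vV^{s_0}|^2 h_T^2$. On the left, the comparability estimate yields $|\vV^+|^2+|\vV^-|^2 \le 2(\beta^*/\beta_*)^2|\vV^{s_0}|^2$, so $\|\ol{\beta}_T\nabla q\|_{0,e}^2 \le 2(\beta^*/\beta_*)^2|\vV^{s_0}|^2 h_T$. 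Combining the two bounds cancels the common factor $|\vV^{s_0}|^2 h_T$ and produces \eqref{eqn:BrokenTraceIneq} with a constant $C^2 = \frac{128(\beta^*)^3}{\pi\rho^2\beta_*^2}$ depending only on $\rho$ and $\beta$.

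The main obstacle is precisely that a standard scaled trace inequality applied piece-by-piece fails here: the interface may cut $T$ so that one of $T^+,T^-$ is an arbitrarily thin sliver, and then $|\vV^-|^2$ (say) cannot be recovered from the vanishingly small quantity $\frac{1}{\ol{\beta}^-}|\vV^-|^2|T^-|$. The two ingredients that resolve this are the transfer of control across the interface through the conormal-continuity comparability $|\vV^+|\simeq|\vV^-|$, and the guarantee from \Cref{lem:StarShaped} that at least one piece carries a ball of radius $\rho h_T/8$, on which a genuine $O(h_T^2)$ volume lower bound holds. Everything else is bookkeeping of the constants $\beta_*,\beta^*,\rho$.
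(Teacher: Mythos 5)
Your proof is correct, and its core is the same as the paper's: write $q$ in the explicit basis $\{\vphi_1,\vphi_2,\vphi_3\}$, observe that both sides of \eqref{eqn:BrokenTraceIneq} are comparable to the quadratic form $b^2+c^2$ in the non-constant coefficients (times $|e|$ and $h_T^2$ respectively), and balance the powers of $h_T$. Where you diverge is in the volume lower bound. You treat the possibility of a thin sliver $T^s$ as the main obstacle, and resolve it by invoking \Cref{lem:StarShaped} to find a ball of radius $\rho h_T/8$ in one piece and then transferring control to the other piece via the conormal-continuity comparability $|\vV^+|\simeq|\vV^-|$. The paper's proof shows this detour is unnecessary: since $\ol{\beta}_T|\nabla q|^2=\ol{\beta}_T b^2+\ol{\beta}_T^{-1}c^2\ge \beta_* b^2+(\beta^*)^{-1}c^2$ pointwise on \emph{both} pieces, one can integrate over all of $T$ and use $|T|\ge \pi\rho^2h_T^2$ from \Cref{assump:MeshRegular}~(iii); no piece ever needs to carry the bound alone. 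The paper's route is shorter and avoids the ``$h$ sufficiently small'' hypothesis that \Cref{lem:StarShaped} requires; your route costs an extra factor of $(\beta^*/\beta_*)^2$ in the constant but has the small virtue of making explicit how the flux-continuity condition built into $\what{\mb{P}}_1(T)$ ties the two constant values of $\ol{\beta}_T\nabla q$ together. (One pedantic point: when you ``cancel the common factor $|\vV^{s_0}|^2h_T$'' you should note that $|\vV^{s_0}|=0$ forces $\nabla q\equiv 0$, so that case is trivial.)
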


\begin{proof}
Recall that the following piecewise polynomials form a basis of the space $\what{\mb{P}}_1(T)$:
\begin{dis}
	\vphi_1(\vx) = 1, \quad \vphi_{2}(\vx) = \vt_{\Gamma}\cdot(\vx - \vx_0), \quad \vphi_{3}(\vx) = \ol{\beta}_T^{-1}\vn_{\Gamma}\cdot(\vx - \vx_0), \quad \forall \vx\in T,
\end{dis}
where $\vx_0$ is the midpoint of $\Gamma_h^T$. Let $q = a\vphi_1 + b\vphi_2 + c\vphi_3$ for $a,b,c\in\R$. Then
\begin{dis}
\nabla q = b\vt_{\Gamma} + c\ol{\beta}_T^{-1}\vn_{\Gamma}, \quad \nabla q\cdot\nabla q = b^2 + c^2\ol{\beta}_T^{-2}.
\end{dis}
By \Cref{assump:MeshRegular} (iii), we have
\begin{align*}
\left\|\ol{\beta}_T\nabla q\right\|_{0,e}^2 = & \ \int_e\left|\ol{\beta}_T\nabla q\right|^2\diff s \leq ((\beta^*)^2b^2 + c^2)|e| \leq C(\beta_*,\beta^*,\rho)(b^2 + c^2)h_T, \\
\big\|\ol{\beta}_T^{1/2}\nabla q\big\|_{0,T}^2 = & \ \int_T\ol{\beta}_T|\nabla q|^2\diff\vx \geq \beta_*(b^2 + c^2(\beta^*)^{-2})|T| \geq C(\beta_*,\beta^*,\rho)(b^2 + c^2)h_T^2.
\end{align*}
Thus there exists a positive constant $C$ depending only on $\rho$ and $\beta$ such that the inequality \eqref{eqn:BrokenTraceIneq} holds.
\end{proof}

Note that we have the following inverse inequality holds (see, for example, (2.6) of \cite{MR3709049}):
\begin{eqn}\label{eqn:InvIneq}
|q|_{1,T} \leq Ch_T^{-1}\|q\|_{0,T} \quad \forall q\in\mb{P}_1(T), \qquad |q|_{1,B} \leq Ch_T^{-1}\|q\|_{0,B} \quad \forall q\in\mb{P}_1(B),
\end{eqn}
where $B$ is a ball in $\R^2$ with radius $\rho h_T$ and $C$ is a positive constant depending only on $\rho$. The following lemma shows that the inverse inequality also holds for the space $\what{\mb{P}}_1$.

\begin{lemma}\label{lem:BrokenPolyIneq}
Let $T\in\mc{T}_h$ be an interface element. There exists a positive constant $C$ depending only on $\rho$ and $\beta$ such that
\begin{dis}
	|q|_{1,T} \leq Ch_T^{-1}\|q\|_{0,T} \quad \forall q\in\what{\mb{P}}_1(T).
\end{dis}
\end{lemma}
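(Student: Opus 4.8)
The plan is to reduce the estimate to the standard inverse inequality \eqref{eqn:InvIneq} on a ball contained in one of the two subelements $T^{\pm}$, exploiting the fact that the gradient of any $q\in\what{\mb{P}}_1(T)$ does not depend on its constant part. Write $q = a\vphi_1 + b\vphi_2 + c\vphi_3$ with $a,b,c\in\R$ in the basis used in \Cref{lem:BrokenTraceIneq}. Since $\vt_{\Gamma}$ and $\vn_{\Gamma}$ are orthonormal and $\ol{\beta}_T$ is piecewise constant, on $T^s$ one has $\nabla q = b\vt_{\Gamma} + c(\ol{\beta}^s)^{-1}\vn_{\Gamma}$, so that $|\nabla q|^2 = b^2 + c^2(\ol{\beta}^s)^{-2}$ is constant on each of $T^+$ and $T^-$.

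First I would bound $|q|_{1,T}$ from above. Using $\beta_*\leq\ol{\beta}^s\leq\beta^*$ together with $T\sus[-h_T,h_T]^2$, which gives $|T|\leq Ch_T^2$, and summing the piecewise-constant integrand over $T^+$ and $T^-$ yields
\begin{dis}
|q|_{1,T}^2 = \sum_{s=+,-}\big(b^2 + c^2(\ol{\beta}^s)^{-2}\big)|T^s| \leq C(\beta_*,\beta^*)(b^2+c^2)h_T^2.
\end{dis}

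Next I would bound $\|q\|_{0,T}$ from below. By \Cref{lem:StarShaped}, for $h$ sufficiently small one of $T^+,T^-$, say $T^s$, contains a ball $B$ of radius $\rho h_T/8$. On $B\sus T^s$ the function $q$ is a genuine affine polynomial, $q|_B\in\mb{P}_1(B)$, so the inverse inequality \eqref{eqn:InvIneq} applies on $B$; since it is scale invariant, its constant depends only on $\rho$ through the radius $\rho h_T/8$ comparable to $h_T$, and we obtain $\|q\|_{0,B}\geq Ch_T|q|_{1,B}$. Because $|\nabla q|$ is constant on $B$ and $|B| = \pi(\rho h_T/8)^2 \geq Ch_T^2$,
\begin{dis}
|q|_{1,B}^2 = \big(b^2 + c^2(\ol{\beta}^s)^{-2}\big)|B| \geq C(\beta^*,\rho)(b^2+c^2)h_T^2,
\end{dis}
and hence $\|q\|_{0,T}^2 \geq \|q\|_{0,B}^2 \geq Ch_T^2|q|_{1,B}^2 \geq C(b^2+c^2)h_T^4$. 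Combining this with the upper bound on $|q|_{1,T}$ gives $|q|_{1,T}^2 \leq Ch_T^{-2}\|q\|_{0,T}^2$, which is the desired estimate after taking square roots.

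The main obstacle is the lower bound on $\|q\|_{0,T}$: a priori the constant term $a$ could interfere, and cancellation might make $\|q\|_{0,T}$ small compared with the seminorm. The resolution is that the seminorm sees only $b$ and $c$, while \Cref{lem:StarShaped} furnishes a ball of radius comparable to $h_T$ on which $q$ is a true first-degree polynomial; the classical inverse estimate then controls $\|q\|_{0,B}$ from below in terms of $b,c$ alone, and passing from $B$ to the larger set $T$ only increases the $L^2$-norm. No delicate information about the geometry of the cut is needed beyond the existence of such a ball.
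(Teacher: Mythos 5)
Your proof is correct and follows essentially the same route as the paper: both arguments write $q$ in the basis $\{\vphi_1,\vphi_2,\vphi_3\}$, use \Cref{lem:StarShaped} to locate a ball of radius $\rho h_T/8$ inside one of $T^{\pm}$ where $q$ is a genuine affine polynomial, apply the standard inverse inequality \eqref{eqn:InvIneq} on that ball, and compare the explicit piecewise-constant expressions for $|\nabla q|^2$ in terms of $b^2+c^2$ on the ball and on all of $T$. The only difference is presentational (you phrase the ball estimate as a lower bound on $\|q\|_{0,T}$ rather than chaining through $|q_+|_{1,B^+}$), and your closing remark about the constant term $a$ being invisible to the seminorm correctly identifies why the argument works.
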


\begin{proof}
By \Cref{lem:StarShaped}, we may assume that $T^+$ contains a ball $B^+$ with radius $\rho h_T/8$. As in the proof of the previous lemma, consider the basis $\{\vphi_1,\vphi_2,\vphi_3\}$ of $\what{\mb{P}}_1(T)$ and let $q = a\vphi_1 + b\vphi_2 + c\vphi_3$ for $a,b,c\in\R$, and define
\begin{dis}
q_+ \coloneqq a + b\vt_{\Gamma}\cdot(\vx - \vx_0) + c\big(\ol{\beta}^+\big)^{-1}\vn_{\Gamma}\cdot(\vx - \vx_0).
\end{dis}
Then $q = q_+$ on $T^+$. By \eqref{eqn:InvIneq},
\begin{eqn}\label{eqn:BrokenInvProof01}
|q_+|_{1,B^+} \leq Ch_T^{-1}\|q_+\|_{0,B^+} = Ch_T^{-1}\|q\|_{0,B^+} \leq Ch_T^{-1}\|q\|_{0,T}.
\end{eqn}
Since $\vt_{\Gamma}\cdot \vn_{\Gamma} = 0$,
\begin{eqnarray}
|q_+|_{1,B^+}^2 & = & \int_{B^+}\big|b\vt_{\Gamma} + c\big(\ol{\beta}^+\big)^{-1}\vn_{\Gamma}\big|^2\diff\vx = \int_{B^+}\big(b^2 + \big(\ol{\beta}^+\big)^{-2}c^2\big)\diff\vx \nonumber \\
& \geq & \frac{\pi\rho^2h_T^2}{64}C(\beta_*,\beta^*)(b^2 + c^2), \label{eqn:BrokenInvProof02} \\
|q|_{1,T}^2 & = & \int_{T^+}\big|b\vt_{\Gamma} + c\big(\ol{\beta}^+\big)^{-1}\vn_{\Gamma}\big|^2\diff\vx + \int_{T^-}\big|b\vt_{\Gamma} + c\big(\ol{\beta}^-\big)^{-1} \vn_{\Gamma}\big|^2\diff\vx \nonumber \\
& = & \int_{T^+}\big(b^2 + \big(\ol{\beta}^+\big)^{-2}c^2\big)\diff\vx + \int_{T^-}\big(b^2 + \big(\ol{\beta}^-\big)^{-2}c^2\big)\diff\vx \nonumber \\
& \leq & C(\beta_*,\beta^*)h_T^2(b^2+c^2). \label{eqn:BrokenInvProof03}
\end{eqnarray}
Combining the inequalities \eqref{eqn:BrokenInvProof01}-\eqref{eqn:BrokenInvProof03}, we obtain
\begin{dis}
|q|_{1,T} \leq \frac{8}{\sqrt{\pi}\rho}C(\beta_*,\beta^*)|q_+|_{1,B^+} \leq C(\beta_*,\beta^*,\rho)h_T^{-1}\|q\|_{0,T}.
\end{dis}
This completes the proof of the lemma.
\end{proof}

\subsection{Approximation properties of the broken polynomial space $\what{\mb{P}}_1$}

In this subsection, we derive some approximation properties of the broken linear polynomial space $\what{\mb{P}}_1(T)$.

It is well-known that, on each non-interface element $T\in\mc{T}_h$, for any $u\in H^2(T)$ there exists $q\in \mb{P}_1$ such that
\begin{eqn}\label{eqn:BrambleHilbert}
	\|u - q\|_{0,T} + h_T|u - q|_{1,T} \leq C_{\rho}h_T^2\|u\|_{2,T},
\end{eqn}
where $C_{\rho}$ is a positive constant depending only on $\rho$ \cite[Lemma 4.3.8]{MR2373954}.

\begin{theorem}\label{lem:BrokenBrambleHilbert}
Let $u\in\wtilde{H}_{\Gamma}^2(\Omega)$. Then there exists $q\in \what{\mb{P}}_1(\Omega)$ such that
\begin{dis}
\|u - q\|_{0,\Omega} + h|u - q|_{1,\Omega} \leq Ch^2\|u\|_{\wtilde{H}^2(\Omega)},
\end{dis}
where $C$ is a positive constant depending only on $\rho$ and $\beta$.
\end{theorem}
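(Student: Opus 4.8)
The plan is to reduce the global statement to an element-by-element construction: since $\what{\mb{P}}_1(\Omega)$ imposes no inter-element continuity, it suffices to produce on each $T\in\mc{T}_h$ a function $q|_T\in\what{\mb{P}}_1(T)$ with $\|u-q\|_{0,T}+h_T|u-q|_{1,T}\le Ch_T^2\|u\|_{\wtilde{H}^2(T)}$, interpreting $|\cdot|_{1,\Omega}$ as the broken seminorm; squaring and summing then gives the claim with $h=\max_T h_T$. On a non-interface element this is exactly the classical Bramble--Hilbert estimate \eqref{eqn:BrambleHilbert}, so all the work is on an interface element $T$.

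First I would fix global Sobolev extensions $u^+,u^-\in H^2(\Omega)$ of $u|_{\Omega^+},u|_{\Omega^-}$, so that $u^s=u$ on $T^s=T\cap\Omega^s$ while $\|u^s\|_{2,\Omega}\le C\|u\|_{\wtilde{H}^2(\Omega)}$; working with these avoids extending from a possibly thin piece. By \Cref{lem:StarShaped} I may assume $T^+$ is the fat piece, containing a ball of radius $\rho h_T/8$, and one checks that $T^+$ is then itself star-shaped with respect to that ball. Applying \eqref{eqn:BrambleHilbert} on the fat set $T$ to each $u^s$ yields linear polynomials $p^s$ with $\|u^s-p^s\|_{0,T}+h_T|u^s-p^s|_{1,T}\le Ch_T^2|u^s|_{2,T}$. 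I then define $q\in\what{\mb{P}}_1(T)$ to be the unique broken linear polynomial agreeing with $p^+$ on $T^+$, i.e.\ matching the value of $p^+$ at the midpoint of $\Gamma_h^T$, its tangential derivative $\vt_\Gamma\cdot\nabla p^+$, and the flux $\ol{\beta}^+\vn_\Gamma\cdot\nabla p^+$. On $T^+$ the estimate is then immediate, since $q=p^+$ there.

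The crux is the estimate on the other piece $T^-$, which may be arbitrarily thin. Writing $u-q=(u^--p^-)+(p^--q)$ on $T^-$, the first term is controlled by Bramble--Hilbert on the fat $T$; for the second, $p^--q$ is a single linear polynomial whose gradient $\vm$ and midpoint value are the mismatches between the two linear pieces across $\Gamma_h^T$. These mismatches are small precisely because of the interface conditions: the value mismatch and the tangential-gradient mismatch vanish on $\Gamma_h^T$ up to the approximation errors of $p^\pm$ (using $[u]_\Gamma=0$ and the continuity of the tangential derivative of $u$), while the normal-flux mismatch $\ol{\beta}^-\vn_\Gamma\cdot\nabla p^- -\ol{\beta}^+\vn_\Gamma\cdot\nabla p^+$ vanishes up to approximation error plus the consistency error from replacing $\beta$ by $\ol{\beta}_T$, controlled by \eqref{eqn:BetaApprox} together with $[\beta\,\pd u/\pd\vn]_\Gamma=0$. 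Quantitatively I would bound each mismatch in $L^2(\Gamma_h^T)$ by $Ch_T^{1/2}\|u\|_{\wtilde{H}^2(T)}$, applying the standard trace inequality \eqref{eqn:TraceIneq} on the fat piece $T^+$ (to which $\Gamma_h^T$ belongs, and on which all relevant functions are defined), so that the trace constants depend only on $\rho$.

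It then remains to transfer these interface bounds to bounds over $T^-$. Since $\vm$ and the value mismatch are constant, $\|p^--q\|_{0,T^-}$ and $|p^--q|_{1,T^-}$ equal the corresponding $\Gamma_h^T$-quantities multiplied by $(|T^-|/|\Gamma_h^T|)^{1/2}$. The key geometric input, which I would isolate as a short lemma proved from the star-shapedness in \Cref{assump:MeshRegular} (bounding the depth of $T^-$ by a multiple of $|\Gamma_h^T|$), is that $|T^-|\le C|\Gamma_h^T|\,h_T$ with $C=C(\rho)$; this exactly converts the $Ch_T^{1/2}$ interface bounds into the desired $Ch_T$ (seminorm) and $Ch_T^2$ ($L^2$) bounds on $T^-$. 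Finally, summing the squared element estimates and invoking $\sum_T|u^s|_{2,T}^2\le\|u^s\|_{2,\Omega}^2\le C\|u\|_{\wtilde{H}^2(\Omega)}^2$ yields the theorem. The main obstacle is the $T^-$ analysis: one cannot run Bramble--Hilbert or trace inequalities directly on a degenerate sliver, and the resolution is to anchor everything on the fat piece and on the interface, using the jump conditions to gain one power of $h_T$ and the geometric depth bound $|T^-|\le C|\Gamma_h^T|\,h_T$ to absorb the thinness; the only genuine consistency error beyond geometric approximation is the replacement of $\beta$ by $\ol{\beta}_T$ in the flux condition.
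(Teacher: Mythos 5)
Your construction is workable, but it is genuinely different from, and considerably heavier than, the paper's argument. The paper never extends $u$ and never touches the thin subelement: it observes that the two scalar quantities $\nabla u\cdot\vt_{\Gamma}$ and $\beta\nabla u\cdot\vn_{\Gamma}$ belong to $H^1(T)$ on the \emph{whole} element $T$ --- this is exactly what the jump conditions $[u]_{\Gamma}=0$ and $[\beta\,\pd u/\pd\vn]_{\Gamma}=0$ encode --- applies the constant-approximation case of \eqref{eqn:BrambleHilbert} to each of them on the fat, star-shaped $T$ to get constants $c_t,c_n$, and then notes that $\vr=c_t\vt_{\Gamma}+\ol{\beta}_T^{-1}c_n\vn_{\Gamma}$ is automatically an element of $\nabla\what{\mb{P}}_1(T)$; the only consistency error is $\|(\beta-\ol{\beta}_T)\nabla u\|_{0,T}=O(h_T)$ from \eqref{eqn:BetaApprox}, and the $L^2$ bound follows by fixing the mean of $q$ and Poincar\'e--Friedrichs. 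This sidesteps the two places where your route has to work hard: (i) the Sobolev extensions and the matching of $p^+$ and $p^-$ across $\Gamma_h^T$, and (ii) the geometric transfer to the possibly degenerate piece $T^-$. Point (ii) is the one to treat with care: the inequality $|T^-|\le C|\Gamma_h^T|\,h_T$ is \emph{false} for a general subelement (take $\Gamma_h^T$ short and $T^-$ the large complementary piece), so you must prove it specifically for the piece that does not contain the ball of \Cref{lem:StarShaped}; this can be done (a cone argument from a point of $T^-$ through the chord $\Gamma_h^T$, which must capture the ball sitting in $T^+$, forces every point of $T^-$ to lie within distance $C(\rho)|\Gamma_h^T|$ of the chord), but it is a genuine lemma, not a one-liner. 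Also, your claim that $\|p^--q\|_{0,T^-}$ \emph{equals} the corresponding $\Gamma_h^T$-quantity times $(|T^-|/|\Gamma_h^T|)^{1/2}$ is exact only for the constant gradient; for the affine function itself you only get an inequality after separating the midpoint value from the slope, which costs the factor $|\Gamma_h^T|^{-1/2}$ that the area bound then recovers. In short: your proof can be completed and is in the spirit of classical IFE interpolation analyses, while the paper's shorter argument buys all of this for free by choosing the right scalar combinations of $\nabla u$ to approximate.
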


\begin{proof}
Let $T\in\mc{T}_h$ be an interface element. Then we have
\begin{eqn}\label{eqn:gradUDecompose}
\nabla u = (\nabla u\cdot\vt_{\Gamma})\vt_{\Gamma} + (\nabla u\cdot\vn_{\Gamma})\vn_{\Gamma}
\end{eqn}
on $T$. We note that $\nabla u\cdot\vt_{\Gamma}\in H^1(T)$ and $\beta\nabla u\cdot\vn_{\Gamma}\in H^1(T)$. Thus, from \eqref{eqn:BrambleHilbert}, there exist $c_t,c_n\in\R$ such that
\begin{dis}
\|\nabla u\cdot\vt_{\Gamma} - c_t\|_{0,T} \leq C_{\rho}h_T|\nabla u\cdot\vt_{\Gamma}|_{1,T}, \quad \|\beta\nabla u\cdot\vn_{\Gamma} - c_n\|_{0,T} \leq C_{\rho}h_T|\beta\nabla u\cdot\vn_{\Gamma}|_{1,T}.
\end{dis}
Note that
\begin{eqn}\label{eqn:GradUBound}
|\nabla u\cdot\vt_{\Gamma}|_{1,T} \leq C\|u\|_{\wtilde{H}^2(T)}, \quad |\nabla u\cdot\vn_{\Gamma}|_{1,T} \leq C\|u\|_{\wtilde{H}^2(T)}.
\end{eqn}
Thus
\begin{eqn}\label{eqn:BHLemma000}
		\|\nabla u\cdot\vt_{\Gamma} - c_t\|_{0,T} \leq Ch_T\|u\|_{\wtilde{H}^2(T)}, \quad \|\beta\nabla u\cdot\vn_{\Gamma} - c_n\|_{0,T} \leq Ch_T\|u\|_{\wtilde{H}^2(T)}.
	\end{eqn}
	Let
	\begin{dis}
		\vr \coloneqq c_t\vt_{\Gamma} + \ol{\beta}_T^{-1}c_n\vn_{\Gamma}.
	\end{dis}
	Then $\vr\in\nabla \what{\mb{P}}_1(T)$. By \eqref{eqn:gradUDecompose}, \eqref{eqn:BHLemma000}, and \eqref{eqn:BetaApprox},
	\begin{eqnarray}
		\|\nabla u - \vr\|_{0,T} & \leq & \|\nabla u\cdot\vt_{\Gamma} - c_t\|_{0,T} + \beta_*^{-1}\|c_n - \ol{\beta}_T\nabla u\cdot\vn_{\Gamma}\|_{0,T} \nonumber \\
		& \leq & Ch_T\|u\|_{\wtilde{H}^2(T)} + \beta_*^{-1}\|c_n - \beta\nabla u\cdot\vn_{\Gamma}\|_{0,T} + \beta_*^{-1}\|(\beta - \ol{\beta}_T)\nabla u\|_{0,T} \nonumber \\
		& \leq & Ch_T\|u\|_{\wtilde{H}^2(T)}. \label{eqn:BHLemma001}
	\end{eqnarray}
	Since $\vr\in\nabla\what{\mb{P}}_1(T)$, there exists $q\in \what{\mb{P}}_1(T)$ such that $\nabla q = \vr$ and $\int_Tq\diff\vx = \int_Tu\diff\vx$. Then \eqref{eqn:BHLemma001} and Poincar\'e-Friedrichs inequality (cf. \cite{MR1974504}) imply that
	\begin{dis}
		\|u - q\|_{0,T} \leq Ch_T|u - q|_{1,T} \leq Ch_T^2\|u\|_{\wtilde{H}^2(T)}.
	\end{dis}
	This completes the proof of the theorem.
\end{proof}

As a corollary, we obtain the estimate for the $L^2$-projection $Q_0$ onto the space $\what{\mb{P}}_1$ as follows.

\begin{corollary}\label{cor:Q0Approx}
There exists a positive constant $C$, depending only on $\rho$ and $\beta$, such that
\begin{dis}
\|u - Q_0u\|_{0,\Omega} + h|u - Q_0u|_{1,\Omega} \leq Ch^2\|u\|_{\wtilde{H}^2(\Omega)} \quad \forall u\in \wtilde{H}_{\Gamma}^2(\Omega).
\end{dis}
\end{corollary}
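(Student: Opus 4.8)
The plan is to derive the corollary from \Cref{lem:BrokenBrambleHilbert} by exploiting the defining optimality of the $L^2$-projection together with the inverse inequality of \Cref{lem:BrokenPolyIneq}. Since $Q_0$ acts element by element onto $\what{\mb{P}}_1(T)$, and the approximant $q$ produced in \Cref{lem:BrokenBrambleHilbert} is itself constructed element by element (so that its local restrictions satisfy $\|u-q\|_{0,T} + h_T|u-q|_{1,T} \leq Ch_T^2\|u\|_{\wtilde{H}^2(T)}$, as established in that proof), the entire argument reduces to a purely local estimate on each $T\in\mc{T}_h$ followed by squaring and summation.

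First I would dispatch the $L^2$-part, which is immediate. Fix $T\in\mc{T}_h$ and let $q|_T\in\what{\mb{P}}_1(T)$ be the local approximant from \Cref{lem:BrokenBrambleHilbert}. Because $Q_0u$ is the $L^2(T)$-orthogonal projection of $u$ onto $\what{\mb{P}}_1(T)$, it is the best $L^2$-approximation, so $\|u - Q_0u\|_{0,T} \leq \|u - q\|_{0,T}$. Summing the squares over all elements gives $\|u-Q_0u\|_{0,\Omega} \leq \|u-q\|_{0,\Omega} \leq Ch^2\|u\|_{\wtilde{H}^2(\Omega)}$, which already controls the first term.

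For the $H^1$-seminorm I would insert $q$ via the triangle inequality, $|u - Q_0u|_{1,T} \leq |u-q|_{1,T} + |Q_0u - q|_{1,T}$. The point is that $Q_0u - q \in \what{\mb{P}}_1(T)$, so \Cref{lem:BrokenPolyIneq} applies and yields $|Q_0u - q|_{1,T} \leq Ch_T^{-1}\|Q_0u - q\|_{0,T}$. Estimating $\|Q_0u - q\|_{0,T} \leq \|Q_0u - u\|_{0,T} + \|u - q\|_{0,T} \leq 2\|u-q\|_{0,T}$ (again by the optimality just used) converts this into $|Q_0u - q|_{1,T} \leq Ch_T^{-1}\|u-q\|_{0,T}$. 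Combining with the local bounds from \Cref{lem:BrokenBrambleHilbert} gives
\begin{dis}
|u - Q_0u|_{1,T} \leq |u-q|_{1,T} + Ch_T^{-1}\|u-q\|_{0,T} \leq Ch_T\|u\|_{\wtilde{H}^2(T)} \leq Ch\|u\|_{\wtilde{H}^2(T)}.
\end{dis}
Squaring, summing over $T\in\mc{T}_h$, and taking the square root yields $|u-Q_0u|_{1,\Omega} \leq Ch\|u\|_{\wtilde{H}^2(\Omega)}$, so that $h|u-Q_0u|_{1,\Omega} \leq Ch^2\|u\|_{\wtilde{H}^2(\Omega)}$, completing the bound.

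I do not anticipate a genuine obstacle here, since all the heavy lifting has already been done: the nontrivial facts are that broken linear polynomials matching the flux condition can approximate $\wtilde{H}^2_\Gamma$ functions to second order on interface elements (\Cref{lem:BrokenBrambleHilbert}) and that the inverse inequality survives the passage to $\what{\mb{P}}_1$ (\Cref{lem:BrokenPolyIneq}). The only point requiring a moment of care is that the inverse inequality carries the factor $h_T^{-1}$ rather than $h^{-1}$; this is harmless because the local approximation bound supplies a compensating factor $h_T^2$, and I keep the estimate element-local until the final $\ell^2$-summation, so that no unfavorable ratio $h/h_T$ ever appears.
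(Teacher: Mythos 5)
Your proof is correct and follows essentially the same route as the paper: both arguments localize to each element, use the approximant from \Cref{lem:BrokenBrambleHilbert}, handle the $L^2$ term via the ($L^2$-optimality or, equivalently, $L^2$-stability of) projection $Q_0$, and control the $H^1$ term by a triangle inequality through $q$ combined with the inverse inequality of \Cref{lem:BrokenPolyIneq} applied to $Q_0u-q\in\what{\mb{P}}_1(T)$. The only cosmetic difference is that the paper bounds $\|Q_0q-Q_0u\|_{0,T}\leq\|q-u\|_{0,T}$ by the contraction property of $Q_0$ where you invoke best approximation plus a triangle inequality; the two are interchangeable.
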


\begin{proof}
Let $T\in\mc{T}_h$ be an interface element. By \Cref{lem:BrokenBrambleHilbert}, there exists $q'\in\what{\mb{P}}_1(T)$ such that
\begin{eqn}\label{eqn:InterpolErrorTraceProof001}
	\|u - q'\|_{0,T} + h_T|u - q'|_{1,T} \leq Ch_T^2\|u\|_{\wtilde{H}^2(T)},
\end{eqn}
where $C$ is a positive constant depending only on $\rho$ and $\beta$. Since $\|Q_0v\|_{0,T}\leq \|v\|_{0,T}$ for any $v\in H^1(T)$ and $Q_0q = q$ for any $q\in\what{\mb{P}}_1(T)$, we obtain
\begin{dis}
\|u - Q_0u\|_{0,T} \leq \|u - q'\|_{0,T} + \|Q_0q' - Q_0u\|_{0,T} \leq Ch_T^2\|u\|_{\wtilde{H}^2(T)}.
\end{dis}
By \Cref{lem:BrokenPolyIneq},
\begin{eqnarray*}
|u - Q_0u|_{1,T} & \leq & |u - q'|_{1,T} + |Q_0q' - Q_0u|_{1,T} \leq |u - q'|_{1,T} + h_T^{-1}\|Q_0q' - Q_0u\|_{0,T} \\
& \leq & |u - q'|_{1,T} + h_T^{-1}\|q' - u\|_{0,T} \leq Ch_T\|u\|_{\wtilde{H}^2(T)}.
\end{eqnarray*}
This completes the proof.
\end{proof}

The following lemma gives the $L^2$-norm estimate of $\beta\nabla u - \ol{\beta}_T\nabla (Q_0u)$ on each mesh edge (see Proposition 5.2 in \cite{kwak2015modified}).

\begin{lemma}\label{lem:EdgeApprox}
There exists a positive constant $C$ independent of $h$ such that
\begin{dis}
\sum_{T\in\mc{T}_h}\left\|\beta\nabla u - \ol{\beta}_T\nabla (Q_0u)\right\|_{0,\pd T}^2 \leq Ch\|u\|_{\wtilde{H}^2(\Omega)}^2 \quad \forall u\in\wtilde{H}_{\Gamma}^2(\Omega).
\end{dis}
\end{lemma}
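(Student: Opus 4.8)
The plan is to prove the estimate element by element, establishing that for each $T\in\mc{T}_h$ one has
\[
\left\|\beta\nabla u - \ol{\beta}_T\nabla (Q_0u)\right\|_{0,\pd T}^2 \leq Ch_T\|u\|_{\wtilde{H}^2(T)}^2,
\]
and then summing over $T$ using $h_T\leq h$ and $\sum_{T}\|u\|_{\wtilde{H}^2(T)}^2 = \|u\|_{\wtilde{H}^2(\Omega)}^2$. The noninterface elements are handled by the same argument as below but without the tangential/normal splitting, so I would focus on an interface element $T$ and insert the intermediate function $q\in\what{\mb{P}}_1(T)$ constructed in the proof of \Cref{lem:BrokenBrambleHilbert}, whose gradient is $\nabla q = c_t\vt_{\Gamma} + \ol{\beta}_T^{-1}c_n\vn_{\Gamma}$. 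Writing
\[
\beta\nabla u - \ol{\beta}_T\nabla (Q_0u) = \big(\beta\nabla u - \ol{\beta}_T\nabla q\big) + \ol{\beta}_T\nabla(q - Q_0u),
\]
the second term is the gradient of an element of $\what{\mb{P}}_1(T)$, so the broken trace inequality \eqref{eqn:BrokenTraceIneq}, summed over the at most $N$ edges of $T$, bounds $\|\ol{\beta}_T\nabla(q-Q_0u)\|_{0,\pd T}^2$ by $Ch_T^{-1}|q - Q_0u|_{1,T}^2$; combining $|u - q|_{1,T}\leq Ch_T\|u\|_{\wtilde{H}^2(T)}$ from \Cref{lem:BrokenBrambleHilbert} with \Cref{cor:Q0Approx} gives $|q - Q_0u|_{1,T}\leq Ch_T\|u\|_{\wtilde{H}^2(T)}$, so this term contributes $Ch_T\|u\|_{\wtilde{H}^2(T)}^2$.

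For the consistency term, since $\ol{\beta}_T\nabla q = (\ol{\beta}_Tc_t)\vt_{\Gamma} + c_n\vn_{\Gamma}$ and $\{\vt_{\Gamma},\vn_{\Gamma}\}$ is orthonormal, I would split into tangential and normal parts:
\[
|\beta\nabla u - \ol{\beta}_T\nabla q|^2 = |\beta\nabla u\cdot\vt_{\Gamma} - \ol{\beta}_Tc_t|^2 + |\beta\nabla u\cdot\vn_{\Gamma} - c_n|^2.
\]
The normal part is straightforward: the jump condition $[\beta\,\pd u/\pd\vn]_{\Gamma}=0$ makes $\beta\nabla u\cdot\vn_{\Gamma}$ continuous across $\Gamma$, hence an element of $H^1(T)$, so the standard trace inequality \eqref{eqn:TraceIneq} together with the $L^2$ bound $\|\beta\nabla u\cdot\vn_{\Gamma} - c_n\|_{0,T}\leq Ch_T\|u\|_{\wtilde{H}^2(T)}$ from \eqref{eqn:BHLemma000} and $\|\nabla(\beta\nabla u\cdot\vn_{\Gamma})\|_{0,T}\leq C\|u\|_{\wtilde{H}^2(T)}$ (using $\beta\in C^1$ piecewise and $u\in\wtilde{H}^2$) yields $Ch_T\|u\|_{\wtilde{H}^2(T)}^2$.

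The tangential part is the main obstacle, because $\beta\nabla u\cdot\vt_{\Gamma}$ is \emph{not} in $H^1(T)$: although $\nabla u\cdot\vt_{\Gamma}$ is continuous across $\Gamma$ (as $u\in H^1$), the coefficient $\beta$ jumps there, so the trace inequality cannot be applied to the product directly. I would circumvent this by writing
\[
\beta\nabla u\cdot\vt_{\Gamma} - \ol{\beta}_Tc_t = \beta(\nabla u\cdot\vt_{\Gamma} - c_t) + (\beta - \ol{\beta}_T)c_t.
\]
The first summand isolates the $H^1(T)$-function $\nabla u\cdot\vt_{\Gamma} - c_t$, to which the trace inequality and \eqref{eqn:BHLemma000} apply (with $\beta\leq\beta^*$), giving $Ch_T\|u\|_{\wtilde{H}^2(T)}^2$. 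For the second summand I would use the coefficient bound $|\beta - \ol{\beta}_T|\leq Ch_T$ from \eqref{eqn:BetaApprox} and $|\pd T|\leq Ch_T$ to reduce it to $Ch_T^3 c_t^2$; the crude bound $c_t^2\leq Ch_T^{-2}\|u\|_{\wtilde{H}^2(T)}^2$, obtained from $c_t^2|T| \leq C(\|\nabla u\cdot\vt_{\Gamma} - c_t\|_{0,T}^2 + \|u\|_{1,T}^2)$ together with $|T|\geq C h_T^2$, then again yields $Ch_T\|u\|_{\wtilde{H}^2(T)}^2$. Assembling the four contributions and summing over the mesh completes the proof.
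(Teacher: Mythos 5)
Your proof is correct, and it shares the paper's two central ingredients: the tangential/normal decomposition that isolates the $H^1(T)$-functions $\nabla u\cdot\vt_{\Gamma}$ and $\beta\nabla u\cdot\vn_{\Gamma}$ (so the standard trace inequality \eqref{eqn:TraceIneq} applies to them), and the coefficient-freezing estimate $|\beta-\ol{\beta}_T|\leq Ch_T$ from \eqref{eqn:BetaApprox}. The route differs in the choice of intermediate object and decomposition. The paper writes $\beta\nabla u-\ol{\beta}_T\nabla(Q_0u)=(\beta-\ol{\beta}_T)\nabla u+\ol{\beta}_T(\nabla u-\nabla Q_0u)$ and must therefore estimate $\nabla u-\nabla(Q_0u)$ \emph{on the edge}, which it does by another trace inequality combined with \Cref{cor:Q0Approx}. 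You instead insert the Bramble--Hilbert interpolant $q$ from the proof of \Cref{lem:BrokenBrambleHilbert} (with $\nabla q=c_t\vt_{\Gamma}+\ol{\beta}_T^{-1}c_n\vn_{\Gamma}$), treat the consistency part $\beta\nabla u-\ol{\beta}_T\nabla q$ componentwise using the volume bounds \eqref{eqn:BHLemma000} already established there, and push the purely polynomial remainder $\ol{\beta}_T\nabla(q-Q_0u)$ to the boundary via the broken trace inequality \eqref{eqn:BrokenTraceIneq} — a lemma the paper's proof of this statement does not invoke. What your version buys is that the only boundary trace of a non-polynomial quantity you ever take is of the two scalar $H^1(T)$ components, and the projection $Q_0u$ only enters through the interior estimate $|q-Q_0u|_{1,T}\leq Ch_T\|u\|_{\wtilde{H}^2(T)}$; the price is the extra bookkeeping for the constant $c_t$ in the term $(\beta-\ol{\beta}_T)c_t$, which you handle correctly via $c_t^2\leq Ch_T^{-2}\|u\|_{\wtilde{H}^2(T)}^2$ and $|T|\geq Ch_T^2$. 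Both arguments yield the same $O(h_T^{1/2})$ per-edge bound, and your elementwise summation is the same as the paper's.
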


\begin{proof}
Let $T\in\mc{T}_h$ be an interface element. Let $q = Q_0u$, and let $e\sus \pd T$. As in \eqref{eqn:gradUDecompose}, we have
\begin{eqn}\label{eqn:EdgeApproxDecomp}
\nabla u = (\nabla u\cdot\vt_{\Gamma})\vt_{\Gamma} + (\nabla u\cdot\vn_{\Gamma})\vn_{\Gamma}, \quad \nabla q = (\nabla q\cdot\vt_{\Gamma})\vt_{\Gamma} + (\nabla q\cdot\vn_{\Gamma})\vn_{\Gamma}
\end{eqn}
on $T$. Since $u\in \wtilde{H}^2(T)$, we have $\nabla u\cdot\vt_{\Gamma} \in H^1(T)$ and $\beta\nabla u\cdot\vn_{\Gamma}\in H^1(T)$. Note also that $\ol{\beta}_T\nabla q\cdot\vn_{\Gamma}$ and $\nabla q\cdot\vt_{\Gamma}$ are constants on $T$. Then, by \eqref{eqn:BetaApprox},
\begin{eqnarray}
\left\|\beta\nabla u - \ol{\beta}_T\nabla q\right\|_{0,e} & \leq & \left\|\beta\nabla u - \ol{\beta}_T\nabla u\right\|_{0,e} + \left\|\ol{\beta}_{T}\nabla u - \ol{\beta}_T\nabla q\right\|_{0,e} \nonumber\\
& \leq & Ch_T\|\nabla u\|_{0,e} + C\|\nabla u - \nabla q\|_{0,e}. \label{eqn:EdgeApproxProof002}
\end{eqnarray}
By the trace inequality \eqref{eqn:TraceIneq} and \eqref{eqn:GradUBound},
\begin{eqnarray}
\|\nabla u\|_{0,e} & \leq & \|\nabla u\cdot\vt_{\Gamma}\|_{0,e} + \beta_*^{-1}\|\beta\nabla u\cdot\vn_{\Gamma}\|_{0,e} \nonumber\\
& \leq & Ch_T^{-1/2}\left(\|\nabla u\cdot\vt_{\Gamma}\|_{0,T} + h_T|\nabla u\cdot\vt_{\Gamma}|_{1,T}\right) + Ch_T^{-1/2}\left(\|\beta\nabla u\cdot\vn_{\Gamma}\|_{0,T} + h_T|\beta\nabla u\cdot\vn_{\Gamma}|_{1,T}\right) \nonumber\\
& \leq & Ch_T^{-1/2}|u|_{1,T} + Ch_T^{1/2}\|u\|_{\wtilde{H}^2(T)} \nonumber \\
& \leq & Ch_T^{-1/2}\|u\|_{\wtilde{H}^2(T)}.
\end{eqnarray}
By \eqref{eqn:EdgeApproxDecomp} and \eqref{eqn:BetaApprox},
\begin{eqnarray}
\|\nabla u - \nabla q\|_{0,e} & \leq & \|\nabla u\cdot\vt_{\Gamma} - \nabla q\cdot\vt_{\Gamma}\|_{0,e} + \beta_*^{-1}\|\ol{\beta}_T\nabla u\cdot\vn_{\Gamma} - \ol{\beta}_T\nabla q\cdot\vn_{\Gamma}\|_{0,e} \nonumber \\
& \leq & \|\nabla u\cdot\vt_{\Gamma} - \nabla q\cdot\vt_{\Gamma}\|_{0,e} + \beta_*^{-1}\|(\ol{\beta}_T - \beta)\nabla u\cdot\vn_{\Gamma}\|_{0,e} \nonumber \\
&& + \beta_*^{-1}\|\beta\nabla u\cdot\vn_{\Gamma} - \ol{\beta}_T\nabla q\cdot\vn_{\Gamma}\|_{0,e} \nonumber \\
& \leq & \|\nabla u\cdot\vt_{\Gamma} - \nabla q\cdot\vt_{\Gamma}\|_{0,e} + C\beta_*^{-1}h_T\|\nabla u\|_{0,e} \nonumber \\
&& + \beta_*^{-1}\|\beta\nabla u\cdot\vn_{\Gamma} - \ol{\beta}_T\nabla q\cdot\vn_{\Gamma}\|_{0,e}.
\end{eqnarray}
By the trace inequality \eqref{eqn:TraceIneq}, \Cref{cor:Q0Approx}, and \eqref{eqn:BetaApprox},
\begin{eqnarray}
\|\nabla u\cdot\vt_{\Gamma} - \nabla q\cdot\vt_{\Gamma}\|_{0,e} & \leq & Ch_T^{-1/2}\|\nabla u\cdot\vt_{\Gamma} - \nabla q\cdot\vt_{\Gamma}\|_{0,T} + Ch_T^{1/2}|\nabla u\cdot\vt_{\Gamma} - \nabla q\cdot\vt_{\Gamma}|_{1,T} \nonumber \\
& \leq & Ch_T^{-1/2}\|\nabla u\cdot\vt_{\Gamma} - \nabla q\cdot\vt_{\Gamma}\|_{0,T} + Ch_T^{1/2}|\nabla u\cdot\vt_{\Gamma}|_{1,T} \nonumber \\
& \leq & Ch_T^{1/2}\|u\|_{\wtilde{H}^2(T)}, \\
\|\beta\nabla u\cdot\vn_{\Gamma} - \ol{\beta}_T\nabla q\cdot\vn_{\Gamma}\|_{0,e} & \leq & Ch_T^{-1/2}\|\beta\nabla u\cdot\vn_{\Gamma} - \ol{\beta}_T\nabla q\cdot\vn_{\Gamma}\|_{0,T} + Ch_T^{1/2}|\beta\nabla u\cdot\vn_{\Gamma} - \ol{\beta}_T\nabla q\cdot\vn_{\Gamma}|_{1,T} \nonumber \\
& \leq & Ch_T^{-1/2}\|\beta\nabla u\cdot\vn_{\Gamma} - \ol{\beta}_T\nabla q\cdot\vn_{\Gamma}\|_{0,T} + Ch_T^{1/2}|\beta\nabla u\cdot\vn_{\Gamma}|_{1,T} \nonumber \\
& \leq & Ch_T^{-1/2}\|(\beta - \ol{\beta}_T)\nabla u\|_{0,T} + Ch_T^{-1/2}\|\nabla u - \nabla q\|_{0,T} + Ch_T^{1/2}\|u\|_{\wtilde{H}^2(T)} \nonumber \\
& \leq & Ch_T^{1/2}\|u\|_{\wtilde{H}^2(T)}. \label{eqn:EdgeApproxProof003}
\end{eqnarray}
Now the conclusion follows from the inequalities \eqref{eqn:EdgeApproxProof002}-\eqref{eqn:EdgeApproxProof003}.
\end{proof}

The following lemma gives the $L^2$-norm estimate of $\nabla_w(Q_hu) - \nabla (Q_0u)$ on each element in $\mc{T}_h$.

\begin{lemma}\label{lem:WeakGradApprox}
There exists a positive constant $C$ independent of $h$ such that
\begin{dis}
\|\nabla_w(Q_hu) - \nabla (Q_0u)\|_{0,\Omega} \leq Ch\|u\|_{\wtilde{H}^2(\Omega)} \quad u\in\wtilde{H}_{\Gamma}^2(\Omega).
\end{dis}
\end{lemma}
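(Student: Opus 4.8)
The plan is to argue element by element using the defining relation \eqref{eqn:WeakGradDef} for the discrete weak gradient evaluated at $Q_hu = \{Q_0u,Q_{\pd}u\}$. Fix an interface element $T\in\mc{T}_h$ and write $q_0 = Q_0u$. Since $q_0\in\what{\mb{P}}_1(T)$, both $\nabla_w(Q_hu)|_T$ and $\nabla q_0$ lie in $\nabla\what{\mb{P}}_1(T)$, so their difference equals $\nabla p$ for some $p\in\what{\mb{P}}_1(T)$. Subtracting $\int_T\ol{\beta}_T\nabla q_0\cdot\nabla q\diff\vx$ from both sides of \eqref{eqn:WeakGradDef}, and noting that the second component of $Q_hu$ is exactly $Q_{\pd}u$, I would obtain the error equation
\begin{dis}
\int_T\ol{\beta}_T\big(\nabla_w(Q_hu) - \nabla q_0\big)\cdot\nabla q\diff\vx = -\int_{\pd T}\big(Q_{\pd}(q_0 - u)\big)\big(\ol{\beta}_T\nabla q\cdot\vn_T\big)\diff s \quad \forall q\in\what{\mb{P}}_1(T),
\end{dis}
where the crucial observation is that the consistency term collapses to $Q_{\pd}(Q_0u - u)$, which is small.

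The main step is an energy argument: I would take $q = p$, so the left-hand side becomes $\big\|\ol{\beta}_T^{1/2}\nabla p\big\|_{0,T}^2$. For the right-hand side I would use the Cauchy--Schwarz inequality edgewise, bound $\|\ol{\beta}_T\nabla p\cdot\vn_T\|_{0,\pd T}\leq\|\ol{\beta}_T\nabla p\|_{0,\pd T}$, and invoke the broken trace inequality of \Cref{lem:BrokenTraceIneq} (summed over the at most $N$ edges of $T$) to get $\|\ol{\beta}_T\nabla p\|_{0,\pd T}\leq Ch_T^{-1/2}\big\|\ol{\beta}_T^{1/2}\nabla p\big\|_{0,T}$. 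This reduces everything to controlling $\|Q_{\pd}(q_0 - u)\|_{0,\pd T}$.

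For that factor, $L^2$-stability of the edge projection gives $\|Q_{\pd}(q_0-u)\|_{0,e}\leq\|Q_0u - u\|_{0,e}$, and the standard trace inequality \eqref{eqn:TraceIneq} combined with the approximation bounds of \Cref{cor:Q0Approx} yields $\|Q_0u - u\|_{0,\pd T}\leq Ch_T^{3/2}\|u\|_{\wtilde{H}^2(T)}$, since the powers combine as $h_T^{-1/2}(h_T^2 + h_T\cdot h_T)$. Substituting back, one factor of $\big\|\ol{\beta}_T^{1/2}\nabla p\big\|_{0,T}$ cancels, leaving $\big\|\ol{\beta}_T^{1/2}\nabla p\big\|_{0,T}\leq Ch_T\|u\|_{\wtilde{H}^2(T)}$, whence $\|\nabla_w(Q_hu) - \nabla(Q_0u)\|_{0,T}\leq Ch_T\|u\|_{\wtilde{H}^2(T)}$ after absorbing the factor $\beta_*^{-1/2}$. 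The non-interface elements are handled by the identical computation with the ordinary inverse and trace estimates. Squaring and summing over $T\in\mc{T}_h$ then gives the claimed global bound.

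I expect the only delicate point to be the bookkeeping of the $h_T$ powers in the boundary term; conceptually this is the standard energy argument for weak gradients, and the discontinuous coefficient is absorbed cleanly because \Cref{lem:BrokenTraceIneq} already carries the weight $\ol{\beta}_T$.
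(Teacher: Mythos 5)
Your proposal is correct and follows essentially the same route as the paper: the identical error identity from \eqref{eqn:WeakGradDef}, the test function $q$ with $\nabla q = \nabla_w(Q_hu)-\nabla(Q_0u)$, \Cref{lem:BrokenTraceIneq} for the boundary factor, and the trace inequality \eqref{eqn:TraceIneq} plus \Cref{cor:Q0Approx} for $\|u-Q_0u\|_{0,\pd T}$. The only cosmetic difference is that you cancel elementwise and then sum squares, whereas the paper sums first and applies a global Cauchy--Schwarz; the power bookkeeping $h_T^{-1/2}(h_T^2+h_T\cdot h_T)=h_T^{3/2}$ is exactly right.
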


\begin{proof}
Let $T$ be an interface element. By the definition of the discrete weak gradient \eqref{eqn:WeakGradDef}, we have
\begin{dis}
\int_T\ol{\beta}_T\left(\nabla_w(Q_hu) - \nabla (Q_0u)\right)\cdot\nabla q\diff\vx = - \int_{\pd T}(Q_{\pd}(Q_0u) - Q_{\pd}u)\left(\ol{\beta}_T\frac{\pd q}{\pd \vn}\right)\diff s \quad \forall q \in\what{\mb{P}}_1(T).
\end{dis}
Let $q\in\what{\mb{P}}_1(T)$ satisfy $\nabla q = \nabla_w(Q_hu) - \nabla (Q_0u)$. By the trace inequality \eqref{eqn:TraceIneq}, \Cref{lem:BrokenTraceIneq}, Poincar\'e-Friedrichs inequality, and \Cref{cor:Q0Approx}, we obtain
\begin{align*}
& \|\nabla_w(Q_hu) - \nabla (Q_0u)\|_{0,\Omega}^2 \leq C\sum_{T\in\mc{T}_h}\|u - Q_0u\|_{0,\pd T}\left\|\ol{\beta}_T\nabla q\right\|_{0,\pd T} \\
& \qquad \leq C\sum_{T\in\mc{T}_h}\left(h_T^{-1}\|u - Q_0u\|_{0,T} + |u - Q_0u|_{1,T}\right)\big\|\ol{\beta}_T^{1/2}\nabla q\big\|_{0,T} \\
& \qquad \leq C\sum_{T\in\mc{T}_h}|u - Q_0u|_{1,T}\big\|\ol{\beta}_T^{1/2}\nabla q\big\|_{0,T} \\
& \qquad \leq Ch\|u\|_{\wtilde{H}^2(\Omega)}\|\nabla_w(Q_hu) - \nabla (Q_0u)\|_{0,\Omega},
\end{align*}
and this completes the proof.
\end{proof}

\section{Error Analysis}

In this section, we present the error estimate in the discrete $H^1$-seminorm for the scheme \eqref{eqn:DProb}.

\subsection{Discrete $H^1$-seminorm}

We introduce a discrete $H^1$-seminorm as follows: For $v_h = \{v_0,v_{\pd}\}\in V_h$,
\begin{dis}
|v_h|_{1,h} \coloneqq \left(\sum_{T\in\mc{T}_h}\|\nabla v_0\|_{0,T}^2 + \lambda h_T^{-1}\|Q_{\pd}v_0 - v_{\pd}\|_{0,\pd T}^2\right)^{1/2}.
\end{dis}
The following lemma shows that two seminorms $\enorm{\cdot}$ and $|\cdot|_{1,h}$ on $V_h$ are equivalent.

\begin{lemma}\label{lem:NormEquiv}
There exist two positive constants $C_1$ and $C_2$ independent of $h$ such that
\begin{dis}
C_1|v_h|_{1,h} \leq \enorm{v_h} \leq C_2|v_h|_{1,h} \quad \forall v_h\in V_h.
\end{dis}
\end{lemma}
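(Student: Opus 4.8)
The plan is to exploit the fact that the stabilization term $s(\cdot,\cdot)$ enters $\enorm{\cdot}^2$ and $|\cdot|_{1,h}^2$ in exactly the same way: the second contribution to $|v_h|_{1,h}^2$ is precisely $s(v_h,v_h)$, and $\enorm{v_h}^2 = a(v_h,v_h) + s(v_h,v_h)$. Hence, after cancelling this common term and using the uniform bounds $\beta_* \le \ol{\beta}_T \le \beta^*$ to pass freely between $\big\|\ol{\beta}_T^{1/2}\nabla(\cdot)\big\|_{0,T}$ and $\|\nabla(\cdot)\|_{0,T}$, it suffices to prove, elementwise and up to the stabilization, the two-sided comparison between $\big\|\ol{\beta}_T^{1/2}\nabla_w v_h\big\|_{0,T}$ and $\big\|\ol{\beta}_T^{1/2}\nabla v_0\big\|_{0,T}$.

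For the upper bound $\enorm{v_h} \le C_2 |v_h|_{1,h}$, I would fix $T$ and choose $q\in\what{\mb{P}}_1(T)$ with $\nabla q = \nabla_w v_h|_T$ (possible since $\nabla_w v_h|_T\in\nabla\what{\mb{P}}_1(T)$) in the defining relation \eqref{eqn:WeakGradDef}, obtaining
\[
\big\|\ol{\beta}_T^{1/2}\nabla_w v_h\big\|_{0,T}^2 = \int_T \ol{\beta}_T\,\nabla v_0\cdot\nabla_w v_h\,\diff\vx - \int_{\pd T}(Q_{\pd}v_0 - v_{\pd})\,(\ol{\beta}_T\nabla_w v_h\cdot\vn_T)\,\diff s.
\]
I would bound the volume term by Cauchy--Schwarz and the boundary term by Cauchy--Schwarz on $\pd T$ followed by \Cref{lem:BrokenTraceIneq} applied to $q$, which gives $\big\|\ol{\beta}_T\nabla_w v_h\big\|_{0,\pd T}\le Ch_T^{-1/2}\big\|\ol{\beta}_T^{1/2}\nabla_w v_h\big\|_{0,T}$. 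Dividing through by $\big\|\ol{\beta}_T^{1/2}\nabla_w v_h\big\|_{0,T}$ leaves
\[
\big\|\ol{\beta}_T^{1/2}\nabla_w v_h\big\|_{0,T} \le \big\|\ol{\beta}_T^{1/2}\nabla v_0\big\|_{0,T} + Ch_T^{-1/2}\|Q_{\pd}v_0 - v_{\pd}\|_{0,\pd T},
\]
and squaring and summing over $T\in\mc{T}_h$ bounds $a(v_h,v_h)$ by $C|v_h|_{1,h}^2$; adding the common $s(v_h,v_h)$ yields the claim.

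For the lower bound $C_1|v_h|_{1,h}\le\enorm{v_h}$, I would instead take $q=v_0\in\what{\mb{P}}_1(T)$ in \eqref{eqn:WeakGradDef}, which gives
\[
\big\|\ol{\beta}_T^{1/2}\nabla v_0\big\|_{0,T}^2 = \int_T\ol{\beta}_T\,\nabla_w v_h\cdot\nabla v_0\,\diff\vx + \int_{\pd T}(Q_{\pd}v_0 - v_{\pd})\,(\ol{\beta}_T\nabla v_0\cdot\vn_T)\,\diff s.
\]
Arguing exactly as before --- Cauchy--Schwarz on the volume term and \Cref{lem:BrokenTraceIneq} now applied to $v_0$ on the boundary term --- produces $\big\|\ol{\beta}_T^{1/2}\nabla v_0\big\|_{0,T}\le\big\|\ol{\beta}_T^{1/2}\nabla_w v_h\big\|_{0,T}+Ch_T^{-1/2}\|Q_{\pd}v_0-v_{\pd}\|_{0,\pd T}$. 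Squaring, summing, and adding $s(v_h,v_h)$ gives $|v_h|_{1,h}\le C\,\enorm{v_h}$. On non-interface elements the same two steps go through verbatim, with \Cref{lem:BrokenTraceIneq} replaced by the standard trace inequality \eqref{eqn:TraceIneq} together with the inverse inequality \eqref{eqn:InvIneq}, since there $\what{\mb{P}}_1(T)=\mb{P}_1(T)$ and $\ol{\beta}_T$ is a single constant.

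The only real subtlety I anticipate is the boundary integral: one must notice that both the choice $\nabla q=\nabla_w v_h$ and the choice $q=v_0$ produce a boundary factor $\ol{\beta}_T\nabla(\cdot)\cdot\vn_T$ in which $\nabla(\cdot)$ is the gradient of a $\what{\mb{P}}_1(T)$-function, so that \Cref{lem:BrokenTraceIneq} applies in \emph{both} directions and supplies exactly the $h_T^{-1/2}$ scaling matching the stabilization weight $h_T^{-1}$. Everything else is Cauchy--Schwarz together with the uniform bounds $\beta_*\le\ol{\beta}_T\le\beta^*$.
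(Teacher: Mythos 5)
Your proposal is correct and follows essentially the same route as the paper: both bounds are obtained by testing the defining relation \eqref{eqn:WeakGradDef} with $\nabla q=\nabla_w v_h$ (for the upper bound) and $\nabla q=\nabla v_0$ (for the lower bound), then applying Cauchy--Schwarz and \Cref{lem:BrokenTraceIneq}, with the stabilization term entering both squared norms identically. The only cosmetic difference is that you absorb the boundary term elementwise before summing, whereas the paper sums first and cancels a global factor; the estimates are the same.
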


\begin{proof}
The proof is similar to the proof of Lemma 5.3 in \cite{MR3325251}. Let $v_h = \{v_0,v_{\pd}\}\in V_h$. By the definition of the discrete weak gradient \eqref{eqn:WeakGradDef}, we have
\begin{eqn}\label{eqn:NormEquivProof001}
\int_T\ol{\beta}_T\nabla_wv_h\cdot\nabla q\diff\vx = \int_T\ol{\beta}_T\nabla v_0\cdot\nabla q\diff\vx + \int_{\pd T}\left(v_{\pd} - Q_{\pd}v_0\right)\left(\ol{\beta}_T\nabla q\cdot\vn_T\right)\diff s \quad \forall q\in\what{\mb{P}}_1(T).
\end{eqn}
Let $q\in\what{\mb{P}}_1(\Omega)$ satisfy $\nabla q = \nabla_wv_h$ on each $T\in\mc{T}_h$. Then, by \Cref{lem:BrokenTraceIneq},
\begin{align*}
& \big\|\ol{\beta}^{1/2}\nabla_wv_h\big\|_{0,\Omega}^2 = \sum_{T\in\mc{T}_h}\left(\int_T\ol{\beta}_T\nabla v_0\cdot\nabla_wv_h\diff\vx + \int_{\pd T}\left(v_{\pd} - Q_{\pd}v_0\right)\left(\ol{\beta}_T\nabla_wv_h\cdot\vn_T\right)\diff s\right) \\
& \qquad \leq C\sum_{T\in\mc{T}_h}\left(\|\nabla v_0\|_{0,T}\big\|\ol{\beta}_T^{1/2}\nabla_wv_h\big\|_{0,T} + \|Q_{\pd}v_0 - v_{\pd}\|_{0,\pd T}\|\ol{\beta}_T\nabla_wv_h\|_{0,\pd T}\right) \\
& \qquad \leq C\sum_{T\in\mc{T}_h}\left(\|\nabla v_0\|_{0,T}\big\|\ol{\beta}_T^{1/2}\nabla_wv_h\big\|_{0,T} + Ch^{-1/2}\|Q_{\pd}v_0 - v_{\pd}\|_{0,\pd T}\big\|\ol{\beta}_T^{1/2}\nabla_wv_h\big\|_{0,T}\right) \\
& \qquad \leq C|v_h|_{1,h}\big\|\ol{\beta}^{1/2}\nabla_wv_h\big\|_{0,\Omega}.
\end{align*}
Thus we have $\|\ol{\beta}^{1/2}\nabla_wv_h\|_{0,\Omega} \leq C|v_h|_{1,h}$. Since $s(v_h,v_h) \leq |v_h|_{1,h}^2$, we have
\begin{dis}
\enorm{v_h}^2 = \big\|\ol{\beta}^{1/2}\nabla_wv_h\big\|_{0,\Omega}^2 + s(v_h,v_h) \leq C|v_h|_{1,h}^2.
\end{dis}
On the other hand, let $q\in\what{\mb{P}}_1(\Omega)$ satisfy $\nabla q = \nabla v_0$ on each $T\in\mc{T}_h$. Then, by \eqref{eqn:NormEquivProof001} and \Cref{lem:BrokenTraceIneq} we have
\begin{eqnarray*}
\|\nabla v_0\|_{0,\Omega}^2 & \leq & C\sum_{T\in\mc{T}_h}\int_T\ol{\beta}_T\nabla v_0\cdot\nabla v_0\diff\vx \\
& = & C\sum_{T\in\mc{T}_h}\left(\int_T\ol{\beta}_T\nabla_wv_h\cdot\nabla v_0\diff\vx - \int_{\pd T}(v_{\pd} - Q_{\pd}v_0)\left(\ol{\beta}_T\nabla v_0\cdot\vn_T\right)\diff s\right) \\
& \leq & C\sum_{T\in\mc{T}_h}\left(\big\|\ol{\beta}_T^{1/2}\nabla_wv_h\big\|_{0,T}\|\nabla v_0\|_{0,T} + \|v_{\pd} - Q_{\pd}v_0\|_{0,\pd T}\left\|\ol{\beta}_T\nabla v_0\cdot\vn_T\right\|_{0,\pd T}\right) \\
& \leq & C\sum_{T\in\mc{T}_h}\left(\big\|\ol{\beta}_T^{1/2}\nabla_wv_h\big\|_{0,T}\|\nabla v_0\|_{0,T} + h_T^{-1/2}\|v_{\pd} - Q_{\pd}v_0\|_{0,\pd T}\left\|\nabla v_0\right\|_{0,T}\right) \\
& \leq & C\enorm{v_h}\|\nabla v_0\|_{0,\Omega}.
\end{eqnarray*}
Thus $\|\nabla v_0\|_{0,\Omega} \leq C\enorm{v_h}$. Since $s(v_h,v_h) \leq \enorm{v_h}^2$, we obtain
\begin{dis}
|v_h|_{1,h}^2 = \|\nabla v_0\|_{0,\Omega}^2 + s(v_h,v_h) \leq C\enorm{v_h}^2.
\end{dis}
Hence we have proved the lemma.
\end{proof}

\subsection{Error equation}

The error equation presented in the following lemma will be used to derive the error estimate.

\begin{lemma}
Let $u\in H_0^1(\Omega)$ be the solution of \eqref{eqn:ModelProbWeak} with $f\in L^2(\Omega)$, and let $u_h\in V_{h,0}$ be the solution of \eqref{eqn:DProb}. Then we have
\begin{eqnarray}
a_s(Q_hu - u_h, v_h) & = & s(Q_hu, v_h) + \sum_{T\in\mc{T}_h}\int_T(\ol{\beta}_T\nabla_w(Q_hu) - \beta\nabla u)\cdot\nabla v_0\diff\vx \nonumber \\
&& + \sum_{T\in\mc{T}_h}\int_{\pd T}\left(Q_{\pd}v_0 - v_{\pd}\right)\left(\ol{\beta}_T\nabla_w(Q_hu) - \beta\nabla u\right)\cdot\vn_T\diff s \nonumber \\
&& + \sum_{T\in\mc{T}_h}\int_{\pd T}\left(v_0 - Q_{\pd}v_0\right)\beta\frac{\pd u}{\pd\vn}\diff s, \quad \forall v_h\in V_{h,0}. \label{eqn:ErrorEq}
\end{eqnarray}
\end{lemma}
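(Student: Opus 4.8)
The plan is to reduce the identity to a statement about $a(Q_hu,v_h)-(f,v_0)_{0,\Omega}$ and then transform the two pieces separately, using the weak-gradient definition \eqref{eqn:WeakGradDef} on $a(Q_hu,v_h)$ and element-wise integration by parts on $(f,v_0)_{0,\Omega}$. First I would invoke that $u_h$ solves \eqref{eqn:DProb}, so $a_s(u_h,v_h)=(f,v_0)_{0,\Omega}$ for every $v_h=\{v_0,v_{\pd}\}\in V_{h,0}$. By bilinearity and the splitting $a_s=a+s$,
\[
a_s(Q_hu-u_h,v_h)=a(Q_hu,v_h)+s(Q_hu,v_h)-(f,v_0)_{0,\Omega},
\]
so after peeling off $s(Q_hu,v_h)$ it remains to identify $a(Q_hu,v_h)-(f,v_0)_{0,\Omega}$ with the three remaining sums in \eqref{eqn:ErrorEq}.

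Next I would rewrite $a(Q_hu,v_h)$ via \eqref{eqn:WeakGradDef}. Since $\nabla_w(Q_hu)|_T\in\nabla\what{\mb{P}}_1(T)$, on each $T$ I can choose $q\in\what{\mb{P}}_1(T)$ with $\nabla q=\nabla_w(Q_hu)$ and apply the defining relation to $v_h$ with this $q$, obtaining
\[
\int_T\ol{\beta}_T\nabla_w(Q_hu)\cdot\nabla_w v_h\diff\vx=\int_T\ol{\beta}_T\nabla_w(Q_hu)\cdot\nabla v_0\diff\vx-\int_{\pd T}(Q_{\pd}v_0-v_{\pd})\left(\ol{\beta}_T\nabla_w(Q_hu)\cdot\vn_T\right)\diff s.
\]
Summing over $T\in\mc{T}_h$ expresses $a(Q_hu,v_h)$ as a volume term against $\nabla v_0$ plus a boundary term carrying the factor $(Q_{\pd}v_0-v_{\pd})$.

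For $(f,v_0)_{0,\Omega}$ I would integrate by parts element by element. Because $u\in\wtilde{H}_{\Gamma}^2(\Omega)$, the flux condition $[\beta\pd u/\pd\vn]_{\Gamma}=0$ ensures $\beta\nabla u\in H(\div,T)$ across the interface portion of $T$, with $\div(\beta\nabla u)=-f$, so Green's formula yields
\[
\int_T f v_0\diff\vx=\int_T\beta\nabla u\cdot\nabla v_0\diff\vx-\int_{\pd T}\left(\beta\nabla u\cdot\vn_T\right)v_0\diff s.
\]
To align the edge structure I would use that $v_{\pd}$ is single-valued on interior edges and vanishes on $\pd\Omega$, while $\beta\nabla u\cdot\vn$ is continuous across interior edges (again a consequence of $u\in\wtilde{H}_{\Gamma}^2(\Omega)$); hence $\sum_{T\in\mc{T}_h}\int_{\pd T}(\beta\nabla u\cdot\vn_T)v_{\pd}\diff s=0$, which lets me replace $v_0$ by $v_0-v_{\pd}$ in the boundary sum and then split $v_0-v_{\pd}=(v_0-Q_{\pd}v_0)+(Q_{\pd}v_0-v_{\pd})$.

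Finally I would collect terms. The two volume integrals combine into $\sum_T\int_T(\ol{\beta}_T\nabla_w(Q_hu)-\beta\nabla u)\cdot\nabla v_0\diff\vx$; the two boundary integrals carrying $(Q_{\pd}v_0-v_{\pd})$ combine into the flux-difference boundary term of \eqref{eqn:ErrorEq}; and the integral carrying $(v_0-Q_{\pd}v_0)$ produces $\sum_T\int_{\pd T}(v_0-Q_{\pd}v_0)\beta\,\pd u/\pd\vn\diff s$. I expect the third step to be the main obstacle: the element-wise integration by parts and the vanishing of the $v_{\pd}$-flux sum both rest on the transmission/regularity properties encoded in $\wtilde{H}_{\Gamma}^2(\Omega)$, and the bookkeeping of signs together with keeping straight which of $v_0$, $Q_{\pd}v_0$, $v_{\pd}$ appears in each boundary integral is precisely where care is required to land on the form stated in \eqref{eqn:ErrorEq}.
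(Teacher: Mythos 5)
Your plan is exactly the paper's proof: subtract the discrete equation to reduce to $a_s(Q_hu,v_h)-(f,v_0)_{0,\Omega}$, test the weak-gradient definition \eqref{eqn:WeakGradDef} with $q$ satisfying $\nabla q=\nabla_w(Q_hu)$ on each $T$, integrate $(f,v_0)_{0,\Omega}$ by parts element-wise using $-\nabla\cdot(\beta\nabla u)=f$ together with the interelement flux continuity and single-valuedness of $v_{\pd}$ to annihilate $\sum_T\int_{\pd T}(\beta\nabla u\cdot\vn_T)v_{\pd}\diff s$, and finally split $v_0-v_{\pd}=(v_0-Q_{\pd}v_0)+(Q_{\pd}v_0-v_{\pd})$; the paper only differs cosmetically by routing the edge terms through $\int_e(Q_{\pd}v_0-v_{\pd})g\diff s=\int_e(v_0-v_{\pd})Q_{\pd}g\diff s$ instead of splitting directly. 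On the sign bookkeeping you rightly flag as the delicate point: carrying the signs through (the weak-gradient identity contributes $-\int_{\pd T}(Q_{\pd}v_0-v_{\pd})\ol{\beta}_T\nabla_w(Q_hu)\cdot\vn_T\diff s$ and the integration by parts contributes $+\int_{\pd T}(Q_{\pd}v_0-v_{\pd})\beta\nabla u\cdot\vn_T\diff s$) actually yields the factor $v_{\pd}-Q_{\pd}v_0$ rather than $Q_{\pd}v_0-v_{\pd}$ in the flux-difference boundary term, so \eqref{eqn:ErrorEq} as printed carries a sign slip there --- harmless downstream, since only $|I_3|$ enters the proof of \Cref{thm:EnergyError}.
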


\begin{proof}
Note that, for any $v_h\in V_{h,0}$,
\begin{align*}
a_s(Q_hu - u_h, v_h) = & \ a_s(Q_hu, v_h) - (f,v_0)_{0,\Omega} \\
= & \ a_s(Q_hu,v_h) - \sum_{T\in\mc{T}_h}\left(\int_T\beta\nabla u\cdot\nabla v_0\diff\vx - \int_{\pd T}\beta\frac{\pd u}{\pd\vn}v_0\diff s\right) \\
= & \ \sum_{T\in\mc{T}_h}\int_T\ol{\beta}_T\nabla_w(Q_hu)\cdot\nabla_wv_h\diff\vx \\
& \ + s(Q_hu, v_h) - \sum_{T\in\mc{T}_h}\left(\int_T\beta\nabla u\cdot\nabla v_0\diff\vx - \int_{\pd T}\beta\frac{\pd u}{\pd\vn}v_0\diff s\right) \\
= & \ \sum_{T\in\mc{T}_h}\int_T\ol{\beta}_T\nabla_w(Q_hu)\cdot\nabla v_0\diff\vx - \sum_{T\in\mc{T}_h}\int_{\pd T}(v_0 - v_{\pd})\left(Q_{\pd}\left(\ol{\beta}_T\nabla_w(Q_hu)\cdot\vn_T\right)\right)\diff s \\
& \ + s(Q_hu, v_h) - \sum_{T\in\mc{T}_h}\left(\int_T\beta\nabla u\cdot\nabla v_0\diff\vx - \int_{\pd T}\beta\frac{\pd u}{\pd\vn}v_0\diff s\right) \\
= & \ s(Q_hu, v_h) + \sum_{T\in\mc{T}_h}\int_T(\ol{\beta}_T\nabla_w(Q_hu) - \beta\nabla u)\cdot\nabla v_0\diff\vx \\
& \ - \sum_{T\in\mc{T}_h}\int_{\pd T}(v_0 - v_{\pd})\left(Q_{\pd}\left(\ol{\beta}_T\nabla_w(Q_hu)\cdot\vn_T\right)\right)\diff s + \sum_{T\in\mc{T}_h}\int_{\pd T}\beta\frac{\pd u}{\pd\vn}v_0\diff s.
\end{align*}
Since $\left[\beta\frac{\pd u}{\pd \vn}\right]_e = 0$ for each interior edge $e$ and $v_{\pd}|_e = 0$ for each boundary edge $e$, we obtain
\begin{eqnarray*}
\sum_{T\in\mc{T}_h}\int_{\pd T}\beta\frac{\pd u}{\pd\vn}v_0\diff s & = & \sum_{T\in\mc{T}_h}\int_{\pd T}\beta\frac{\pd u}{\pd\vn}(v_0 - v_{\pd})\diff s \\
& = & \sum_{T\in\mc{T}_h}\int_{\pd T}(v_0 - v_{\pd})\beta\frac{\pd u}{\pd\vn}\diff s - \sum_{T\in\mc{T}_h}\int_{\pd T}(v_0 - v_{\pd})(Q_{\pd}(\beta\nabla u\cdot\vn_T))\diff s \\
&& + \sum_{T\in\mc{T}_h}\int_{\pd T}(v_0 - v_{\pd})(Q_{\pd}(\beta\nabla u\cdot\vn_T))\diff s \\
& = & \sum_{T\in\mc{T}_h}\int_{\pd T}(v_0 - Q_{\pd}v_0)\beta\frac{\pd u}{\pd\vn}\diff s + \sum_{T\in\mc{T}_h}\int_{\pd T}(v_0 - v_{\pd})(Q_{\pd}(\beta\nabla u\cdot\vn_T))\diff s.
\end{eqnarray*}
Using the above equation we obtain
\begin{align*}
a_s(Q_hu - u_h, v_h) = & \ s(Q_hu, v_h) + \sum_{T\in\mc{T}_h}\int_T(\ol{\beta}_T\nabla_w(Q_hu) - \beta\nabla u)\cdot\nabla v_0\diff\vx \\
& \ + \sum_{T\in\mc{T}_h}\int_{\pd T}\left(Q_{\pd}v_0 - v_{\pd}\right)\left(\ol{\beta}_T\nabla_w(Q_hu) - \beta\nabla u\right)\cdot\vn_T\diff s \\
& \ + \sum_{T\in\mc{T}_h}\int_{\pd T}\left(v_0 - Q_{\pd}v_0\right)\beta\frac{\pd u}{\pd\vn}\diff s.
\end{align*}
This completes the proof of the lemma.
\end{proof}

The following lemma can be found in \cite{MR2659584}.

\begin{lemma}\label{lem:NHalfApprox}
Let $T\in\mc{T}_h$ and let $e\sus \pd T$. Then there exists a positive constant $C$ independent of $h$ such that
\begin{dis}
\|u - \ol{u}_e\|_{-1/2,e} \leq Ch|u|_{1,T} \quad \forall u\in H^1(T).
\end{dis}
where $\ol{u}_e = \frac{1}{|e|}\int_eu\diff s$.
\end{lemma}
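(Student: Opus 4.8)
The plan is to combine the zero-mean property of $u - \ol{u}_e$ on $e$ with the duality definition of $\|\cdot\|_{-1/2,e}$, reducing the claim to two $L^2(e)$-estimates. Fix $v\in H^{1/2}(e)$ and set $\ol{v}_e = \frac{1}{|e|}\int_e v\diff s$. Since $\int_e(u - \ol{u}_e)\diff s = 0$, I may subtract the constant $\ol{v}_e$ inside the pairing without changing it, so that
\begin{dis}
\inn{u - \ol{u}_e, v}_e = \int_e(u - \ol{u}_e)(v - \ol{v}_e)\diff s \leq \|u - \ol{u}_e\|_{0,e}\,\|v - \ol{v}_e\|_{0,e}
\end{dis}
by Cauchy--Schwarz. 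Dividing by $\|v\|_{1/2,e}$ and taking the supremum over $v$, it then suffices to prove the two bounds $\|u - \ol{u}_e\|_{0,e} \leq Ch^{1/2}|u|_{1,T}$ and $\|v - \ol{v}_e\|_{0,e} \leq Ch^{1/2}\|v\|_{1/2,e}$.

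For the first factor I would use that $\ol{u}_e$ is the $L^2(e)$-best constant approximation of $u$, so $\|u - \ol{u}_e\|_{0,e} \leq \|u - \ol{u}_T\|_{0,e}$ with $\ol{u}_T = \frac{1}{|T|}\int_T u\diff\vx$. Applying the trace inequality \eqref{eqn:TraceIneq} to $u - \ol{u}_T$ and then the Poincar\'e inequality $\|u - \ol{u}_T\|_{0,T} \leq Ch_T|u|_{1,T}$ (valid since $T$ is star-shaped with respect to a ball of radius $\rho h_T$ by \Cref{assump:MeshRegular}), one obtains
\begin{dis}
\|u - \ol{u}_T\|_{0,e} \leq \|u - \ol{u}_T\|_{0,\pd T} \leq Ch_T^{-1/2}\big(\|u - \ol{u}_T\|_{0,T} + h_T|u|_{1,T}\big) \leq Ch_T^{1/2}|u|_{1,T},
\end{dis}
and the claimed bound follows from $h_T \leq h$.

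The second factor is where the genuine work lies, and I would establish it by scaling to the unit reference edge $\hat{e} = [0,1]$. The point is that the one-dimensional Gagliardo $H^{1/2}$-seminorm is scale-invariant, whereas the $L^2$-norm scales by $|e|^{1/2}$; thus, writing $\hat{v}$ for the pullback of $v$, one has $\|v - \ol{v}_e\|_{0,e} = |e|^{1/2}\|\hat{v} - \ol{\hat{v}}\|_{0,\hat{e}}$ and $|\hat{v}|_{1/2,\hat{e}} = |v|_{1/2,e}$. The fractional Poincar\'e--Wirtinger inequality on the fixed domain $\hat{e}$, namely $\|\hat{v} - \ol{\hat{v}}\|_{0,\hat{e}} \leq C|\hat{v}|_{1/2,\hat{e}}$ (a consequence of the compact embedding $H^{1/2}(\hat{e})\hookrightarrow L^2(\hat{e})$ together with the fact that the seminorm annihilates only constants), then gives
\begin{dis}
\|v - \ol{v}_e\|_{0,e} \leq C|e|^{1/2}|v|_{1/2,e} \leq Ch^{1/2}\|v\|_{1/2,e},
\end{dis}
using $|e| \leq h_T \leq h$ and $|v|_{1/2,e} \leq \|v\|_{1/2,e}$. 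Substituting the two bounds into the Cauchy--Schwarz estimate yields $\inn{u - \ol{u}_e, v}_e \leq Ch|u|_{1,T}\|v\|_{1/2,e}$ for every $v$, whence the assertion upon dividing by $\|v\|_{1/2,e}$ and taking the supremum.

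The main obstacle is the second estimate: one must track the scaling of each ingredient correctly (the scale-invariance of the $H^{1/2}$-seminorm against the $|e|^{1/2}$-scaling of the $L^2$-norm) and justify the fractional Poincar\'e inequality on $\hat{e}$. Care is also needed to ensure the hidden constants depend only on $\rho$ (through the Poincar\'e constant on the star-shaped element $T$) and not on the individual edge or element.
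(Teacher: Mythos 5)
The paper does not actually prove this lemma --- it is quoted from the reference \cite{MR2659584} without proof --- so your argument has to stand on its own, and it does: it is correct, and it is essentially the standard proof of this estimate. The duality reduction is legitimate because the trace of $u$ lies in $L^2(e)$, so the pairing $\inn{u-\ol{u}_e,v}_e$ is the $L^2(e)$ inner product and the zero-mean trick $\inn{u-\ol{u}_e,v}_e=\inn{u-\ol{u}_e,v-\ol{v}_e}_e$ is valid. The first factor is handled exactly as one should: best $L^2(e)$-approximation by constants, the trace inequality \eqref{eqn:TraceIneq}, and the Poincar\'e inequality on the star-shaped element, whose constant depends only on $\rho$ by \Cref{assump:MeshRegular}; together with $|e|\leq h_T\leq h$ this gives $\|u-\ol{u}_e\|_{0,e}\leq Ch_T^{1/2}|u|_{1,T}$. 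For the second factor, your scaling bookkeeping is right ($L^2(e)$ scales like $|e|^{1/2}$ while the one-dimensional Gagliardo $H^{1/2}$-seminorm is scale-invariant), and the fractional Poincar\'e inequality on $\hat{e}=[0,1]$ does not even need a compactness argument: writing $\hat{v}(x)-\ol{\hat{v}}=\int_0^1(\hat{v}(x)-\hat{v}(y))\diff{y}$ and using Cauchy--Schwarz together with $|x-y|\leq 1$ gives $\|\hat{v}-\ol{\hat{v}}\|_{0,\hat{e}}\leq|\hat{v}|_{1/2,\hat{e}}$ with constant $1$, which makes the dependence of the final constant on $\rho$ and nothing else completely transparent. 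The only small caveat is that the conclusion uses $|v|_{1/2,e}\leq\|v\|_{1/2,e}$, which holds for the (unscaled) Sobolev--Slobodeckij norm the paper implicitly adopts; with an edge-scaled $H^{1/2}$-norm the same proof would still go through, but it is worth stating which convention you are using.
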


\subsection{Error estimate}

Now we prove the error estimates in the energy norm and the discrete $H^1$-seminorm.

\begin{theorem}\label{thm:EnergyError}
Suppose that $u\in \wtilde{H}^2(\Omega)\cap H_0^1(\Omega)$ is the solution of \eqref{eqn:ModelProbWeak} with $f\in L^2(\Omega)$. Suppose further that $\beta\nabla u\in H^1(\Omega)$. Let $u_h\in V_{h,0}$ be the solution of \eqref{eqn:DProb}. Then there exists a positive constant $C$ independent of $h$ such that
\begin{dis}
\enorm{Q_hu - u_h} \leq Ch\|u\|_{\wtilde{H}^{2}(\Omega)}.
\end{dis}
\end{theorem}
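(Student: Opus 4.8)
The plan is to estimate the energy norm $\enorm{Q_hu - u_h}$ by taking $v_h = Q_hu - u_h \in V_{h,0}$ in the error equation \eqref{eqn:ErrorEq}, so that the left-hand side becomes $\enorm{Q_hu-u_h}^2$, and then bounding each of the four terms on the right-hand side by $Ch\|u\|_{\wtilde{H}^2(\Omega)}\enorm{v_h}$. Dividing through by $\enorm{v_h}$ then yields the desired estimate. The work therefore reduces to controlling the stabilization term $s(Q_hu,v_h)$ and the three volume/boundary integrals against the energy norm of $v_h$; by \Cref{lem:NormEquiv} it is equivalent to bound them against $|v_h|_{1,h}$.

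For the first term I would write $s(Q_hu,v_h)$ using Cauchy--Schwarz as $s(Q_hu,v_h) \le s(Q_hu,Q_hu)^{1/2}\,s(v_h,v_h)^{1/2}$, noting $s(v_h,v_h)^{1/2}\le \enorm{v_h}$, so it suffices to show $s(Q_hu,Q_hu)\le Ch^2\|u\|_{\wtilde{H}^2(\Omega)}^2$. Here the key point is that $Q_{\pd}(Q_0u)-Q_{\pd}u = Q_{\pd}(Q_0u-u)$, so each summand is $h_T^{-1}\|Q_{\pd}(Q_0u-u)\|_{0,\pd T}^2 \le h_T^{-1}\|Q_0u-u\|_{0,\pd T}^2$; applying the trace inequality \eqref{eqn:TraceIneq} to $Q_0u-u$ and then \Cref{cor:Q0Approx} gives the $h^2$ bound. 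For the second volume term, I split $\ol{\beta}_T\nabla_w(Q_hu)-\beta\nabla u = \ol{\beta}_T(\nabla_w(Q_hu)-\nabla(Q_0u)) + (\ol{\beta}_T\nabla(Q_0u)-\beta\nabla u)$, controlling the first piece by \Cref{lem:WeakGradApprox} and the second by \Cref{cor:Q0Approx} together with \eqref{eqn:BetaApprox}; after Cauchy--Schwarz each contributes $Ch\|u\|_{\wtilde{H}^2(\Omega)}\|\nabla v_0\|_{0,\Omega}$.

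The third term, the boundary integral involving $Q_{\pd}v_0-v_{\pd}$, is handled by Cauchy--Schwarz on each $\pd T$: the factor $\|Q_{\pd}v_0-v_{\pd}\|_{0,\pd T}$ combines with $h_T^{-1/2}$ to form the stabilization part of $|v_h|_{1,h}$, so I need $\sum_T h_T\|\ol{\beta}_T\nabla_w(Q_hu)-\beta\nabla u\|_{0,\pd T}^2 \le Ch^2\|u\|_{\wtilde{H}^2(\Omega)}^2$. I would decompose this edge quantity exactly as in the volume term: the piece $\beta\nabla u - \ol{\beta}_T\nabla(Q_0u)$ is controlled directly by \Cref{lem:EdgeApprox} (which gives the factor $h^{1/2}$ matching the extra $h_T^{1/2}$), while the piece $\ol{\beta}_T(\nabla(Q_0u)-\nabla_w(Q_hu))$ is estimated by the broken trace inequality \Cref{lem:BrokenTraceIneq} followed by \Cref{lem:WeakGradApprox}.

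The most delicate term is the fourth one, $\sum_T\int_{\pd T}(v_0-Q_{\pd}v_0)\beta\frac{\pd u}{\pd\vn}\diff s$, since there is no obvious small factor in $v_h$ to pair against it. The trick is to exploit the two jump conditions: because $[\beta\frac{\pd u}{\pd\vn}]_e=0$ on interior edges and $v_\pd$ is single-valued, one may subtract the edge-average $\ol{(\beta\frac{\pd u}{\pd\vn})}_e$ freely, writing the sum as $\sum_T\int_{\pd T}(v_0-Q_{\pd}v_0)\big(\beta\frac{\pd u}{\pd\vn}-\ol{(\beta\tfrac{\pd u}{\pd\vn})}_e\big)\diff s$ after telescoping the constant part away; since $v_0-Q_{\pd}v_0$ has mean zero against constants on each edge this is legitimate. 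I would then bound this duality pairing using the negative-norm estimate \Cref{lem:NHalfApprox} applied to $\beta\frac{\pd u}{\pd\vn}\in H^1(T)$ (the hypothesis $\beta\nabla u\in H^1(\Omega)$ is exactly what makes this available), pairing $\|v_0-Q_{\pd}v_0\|_{1/2,e}$ against $\|\beta\frac{\pd u}{\pd\vn}-\ol{(\cdot)}_e\|_{-1/2,e}\le Ch|\beta\frac{\pd u}{\pd\vn}|_{1,T}$, and absorbing the $H^{1/2}$-norm of $v_0-Q_{\pd}v_0$ into $\|\nabla v_0\|_{0,T}$ via a trace/inverse argument. This fourth estimate, reconciling the duality pairing with the available energy-norm control of $v_h$, is the main obstacle; the first three terms are routine given the lemmas already proved.
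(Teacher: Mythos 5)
Your proposal is correct and follows essentially the same route as the paper: the same four-term splitting of the error equation, with $I_1$--$I_3$ handled by the identical combination of \eqref{eqn:TraceIneq}, \Cref{cor:Q0Approx}, \Cref{lem:WeakGradApprox}, \Cref{lem:EdgeApprox}, and \Cref{lem:BrokenTraceIneq}. The only (harmless) deviation is in $I_4$, where you apply \Cref{lem:NHalfApprox} to the flux $\beta\frac{\pd u}{\pd\vn}$ minus its edge average and pay with $\|v_0-Q_{\pd}v_0\|_{1/2,e}$, whereas the paper applies that lemma to $v_0-Q_{\pd}v_0$ itself (noting $Q_{\pd}v_0=\ol{(v_0)}_e$) and pays with $\|\beta\nabla u\|_{1/2,e}$ via the trace theorem --- both variants use the same hypothesis $\beta\nabla u\in H^1(\Omega)$ and yield the same $O(h)$ bound.
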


\begin{proof}
Let $v_h = Q_hu - u_h$. From the error equation \eqref{eqn:ErrorEq}, we have
\begin{eqnarray}
\enorm{Q_hu - u_h}^2 & = & \ a_s(Q_hu - u_h, v_h) \nonumber \\
& = & \ s(Q_hu, v_h) + \sum_{T\in\mc{T}_h}\int_T(\ol{\beta}_T\nabla_w(Q_hu) - \beta\nabla u)\cdot\nabla v_0\diff\vx \nonumber \\
&& + \sum_{T\in\mc{T}_h}\int_{\pd T}\left(Q_{\pd}v_0 - v_{\pd}\right)\left(\ol{\beta}_T\nabla_w(Q_hu) - \beta\nabla u\right)\cdot\vn_T\diff s \nonumber \\
&& + \sum_{T\in\mc{T}_h}\int_{\pd T}\left(v_0 - Q_{\pd}v_0\right)\beta\frac{\pd u}{\pd\vn}\diff s \nonumber \\
& \eqqcolon & \ I_1 + I_2 + I_3 + I_4. \label{eqn:EnergyErrProof001}
\end{eqnarray}
By the trace inequality \eqref{eqn:TraceIneq}, Poincar\'e-Friedrichs inequality, and \Cref{cor:Q0Approx},
\begin{eqnarray}
|I_1| & \leq & C\sum_{T\in\mc{T}_h}h_T^{-1/2}\|u - Q_0u\|_{0,\pd T}h_T^{-1/2}\|v_{\pd} - Q_{\pd}v_0\|_{0,\pd T} \nonumber \\
& \leq & C\sum_{T\in\mc{T}_h}\left(h_T^{-1}\|u - Q_0u\|_{0,T} + |u - Q_0u|_{1,T}\right)h_T^{-1/2}\|v_{\pd} - Q_{\pd}v_0\|_{0,\pd T} \nonumber \\
& \leq & C\sum_{T\in\mc{T}_h}|u - Q_0u|_{1,T}h_T^{-1/2}\|v_{\pd} - Q_{\pd}v_0\|_{0,\pd T} \nonumber \\
& \leq & Ch\|u\|_{\wtilde{H}^2(\Omega)}\enorm{v_h}. \label{eqn:EnergyErrProof002}
\end{eqnarray}
From \eqref{eqn:BetaApprox}, \Cref{lem:WeakGradApprox}, and \Cref{lem:NormEquiv},
\begin{eqnarray}
|I_2| & \leq & \sum_{T\in\mc{T}_h}\left(\|\ol{\beta}_T\nabla_w(Q_hu) - \ol{\beta}_T\nabla u\|_{0,T} + \|(\ol{\beta}_T - \beta)\nabla u\|_{0,T}\right)\|\nabla v_0\|_{0,T} \nonumber \\
& \leq & Ch\|u\|_{\wtilde{H}^2(\Omega)}\enorm{v_h}. \label{eqn:EnergyErrProof003}
\end{eqnarray}
Since $\nabla_w(Q_hu),\nabla (Q_0u) \in \what{\mb{P}}_1(T)$ on each $T\in\mc{T}_h$, using \Cref{lem:BrokenTraceIneq}, \Cref{lem:EdgeApprox}, and \Cref{lem:WeakGradApprox}, we have
\begin{eqnarray}
|I_3| & \leq & \sum_{T\in\mc{T}_h}h_T^{-1/2}\|v_{\pd} - Q_{\pd}v_0\|_{0,\pd T}h_T^{1/2}\left(\left\|\ol{\beta}_T\left(\nabla_w(Q_hu) - \nabla (Q_0u)\right)\right\|_{0,\pd T} + \left\|\ol{\beta}_T\nabla (Q_0u) - \beta\nabla u\right\|_{0,\pd T}\right) \nonumber \\
& \leq & C\enorm{v_h}\left(\sum_{T\in\mc{T}_h}h_T\left(\left\|\ol{\beta}_T\left(\nabla_w(Q_hu) - \nabla (Q_0u)\right)\right\|_{0,\pd T}^2 + \left\|\ol{\beta}_T\nabla (Q_0u) - \beta\nabla u\right\|_{0,\pd T}^2\right)\right)^{1/2} \nonumber \\
& \leq & C\enorm{v_h}\left(\sum_{T\in\mc{T}_h}\left(\big\|\ol{\beta}_T^{1/2}\left(\nabla_w(Q_hu) - \nabla (Q_0u)\right)\big\|_{0,T}^2 + h_T\left\|\ol{\beta}_T\nabla (Q_0u) - \beta\nabla u\right\|_{0,\pd T}^2\right)\right)^{1/2} \nonumber \\
& \leq & Ch\|u\|_{\wtilde{H}^2(\Omega)}\enorm{v_h}. \label{eqn:EnergyErrProof004}
\end{eqnarray}
Let $T\in\mc{T}_h$ and $e\sus\pd T$. Since $\beta\nabla u\in H^1(\Omega)$, by \Cref{lem:NHalfApprox} and the trace theorem, we have
\begin{eqnarray*}
\left|\int_{e}(v_0 - Q_{\pd}v_0)\beta\frac{\pd u}{\pd \vn}\diff s\right| & \leq & \|\beta\nabla u\|_{1/2,e}\|v_0 - Q_{\pd}v_0\|_{-1/2,e} \\
& \leq & Ch\|u\|_{\wtilde{H}^2(T)}|\nabla v_0|_{1,T}
\end{eqnarray*}
Thus we obtain from \Cref{remark:MeshRegular} that
\begin{eqnarray}
|I_4| & \leq & \sum_{T\in\mc{T}_h}\sum_{e\sus \pd T}\left|\int_{e}(v_0 - Q_{\pd}v_0)\beta\frac{\pd u}{\pd \vn}\diff s\right| \nonumber \\
& \leq & Ch\sum_{T\in\mc{T}_h}\|u\|_{\wtilde{H}^2(T)}|\nabla v_0|_{1,T} \nonumber \\
& \leq & Ch\enorm{v_h}\|u\|_{\wtilde{H}^2(\Omega)}. \label{eqn:EnergyErrProof005}
\end{eqnarray}
Now combining the inequalities \eqref{eqn:EnergyErrProof001}-\eqref{eqn:EnergyErrProof005} we have
\begin{dis}
\enorm{u_h - Q_hu}^2 \leq Ch\|u\|_{\wtilde{H}^2(\Omega)}\enorm{v_h} = Ch\|u\|_{\wtilde{H}^2(\Omega)}\enorm{u_h - Q_hu}.
\end{dis}
This concludes the proof of the theorem.
\end{proof}

Using \Cref{lem:NormEquiv} and \Cref{thm:EnergyError}, we immediately obtain the following discrete $H^1$-seminorm error estimate.

\begin{corollary}\label{cor:H1error}
Suppose that $u\in \wtilde{H}^2(\Omega)\cap H_0^1(\Omega)$ is the solution of \eqref{eqn:ModelProbWeak} with $f\in L^2(\Omega)$. Suppose further that $\beta\nabla u\in H^1(\Omega)$. Let $u_h\in V_{h,0}$ be the solution of \eqref{eqn:DProb}. Then there exists a positive constant $C$ independent of $h$ such that
\begin{dis}
|u_h - Q_hu|_{1,h} \leq Ch\|u\|_{\wtilde{H}^2(\Omega)}.
\end{dis}
\end{corollary}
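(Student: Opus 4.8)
The plan is to deduce this estimate directly from the energy-norm error bound of \Cref{thm:EnergyError} together with the norm equivalence established in \Cref{lem:NormEquiv}, so that essentially no new analysis is required: the corollary is a matter of chaining two inequalities that are already in hand.

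First I would set $v_h := u_h - Q_hu$ and note that $v_h\in V_{h,0}$. Indeed, $u_h\in V_{h,0}$ by construction, while $Q_hu\in V_{h,0}$ because $u\in H_0^1(\Omega)$ forces $Q_{\pd}u = 0$ on every boundary edge $e\in\mc{E}_h^b$. (In fact \Cref{lem:NormEquiv} is stated on all of $V_h$, so this membership is not even strictly needed.) Applying the left-hand inequality of \Cref{lem:NormEquiv} to $v_h$ then gives
\begin{dis}
C_1|u_h - Q_hu|_{1,h} \leq \enorm{u_h - Q_hu}.
\end{dis}

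Next I would invoke \Cref{thm:EnergyError} to bound the right-hand side. Since the energy norm is symmetric under a sign change, $\enorm{u_h - Q_hu} = \enorm{Q_hu - u_h} \leq Ch\|u\|_{\wtilde{H}^2(\Omega)}$, where the hypotheses $u\in\wtilde{H}^2(\Omega)\cap H_0^1(\Omega)$ and $\beta\nabla u\in H^1(\Omega)$ required by \Cref{thm:EnergyError} are exactly those assumed in the corollary. Dividing through by the $h$-independent constant $C_1 > 0$ yields $|u_h - Q_hu|_{1,h} \leq Ch\|u\|_{\wtilde{H}^2(\Omega)}$, as claimed.

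I do not anticipate any genuine obstacle in this argument. All of the analytic difficulty has already been absorbed upstream: into \Cref{thm:EnergyError} (which itself rests on the weak-gradient approximation of \Cref{lem:WeakGradApprox}, the edge estimate of \Cref{lem:EdgeApprox}, and the negative-norm duality bound of \Cref{lem:NHalfApprox}) and into the two-sided bound of \Cref{lem:NormEquiv}. The only point worth stating carefully is the passage from the energy norm to the discrete $H^1$-seminorm, which is precisely what the norm equivalence supplies with $h$-independent constants.
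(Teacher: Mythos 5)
Your proposal is correct and is exactly the paper's argument: the paper derives \Cref{cor:H1error} immediately by combining the lower bound in \Cref{lem:NormEquiv} with the energy-norm estimate of \Cref{thm:EnergyError}. No further comment is needed.
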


\section{Numerical Examples}

In this section, we report several numerical results. We solve the problem \eqref{eqn:ModelProb}-\eqref{eqn:ModelProbJump} with $\Omega = [0,1]^2$ partitioned into two different families of meshes as follows:
\begin{enumerate}[label=(\roman*)]
\item M1: uniform square meshes with $h = 1/2^3,1/2^4,\cdots,1/2^8$,
\item M2: unstructured polygonal meshes with $h = 1/2^3,1/2^4,\cdots,1/2^8$.
\end{enumerate}
Some examples of the meshes are shown in \Cref{fig:mesh}. The unstructured polygonal meshes are generated from PolyMesher \cite{talischi2012polymesher}. Let $u$ be the exact solution and let $u_h = \{u_0,u_{\pd}\}$ be the solution of our immersed WG method. We compute errors in the discrete $H^1$-seminorm and $L^2$-norm, which are given by
\begin{dis}
|u_h - Q_hu|_{1,h}, \quad \|u_0 - Q_0u\|_{0,\Omega},
\end{dis}
respectively. For the examples below, the discrete $H^1$-seminorm error converges with order $O(h)$, which agrees with our theoretical result. Moreover, the results show the $O(h^2)$ error in discrete $L^2$-norm, which is optimal.

\begin{figure}
	\centering
	\includegraphics[width = 0.25\textwidth]{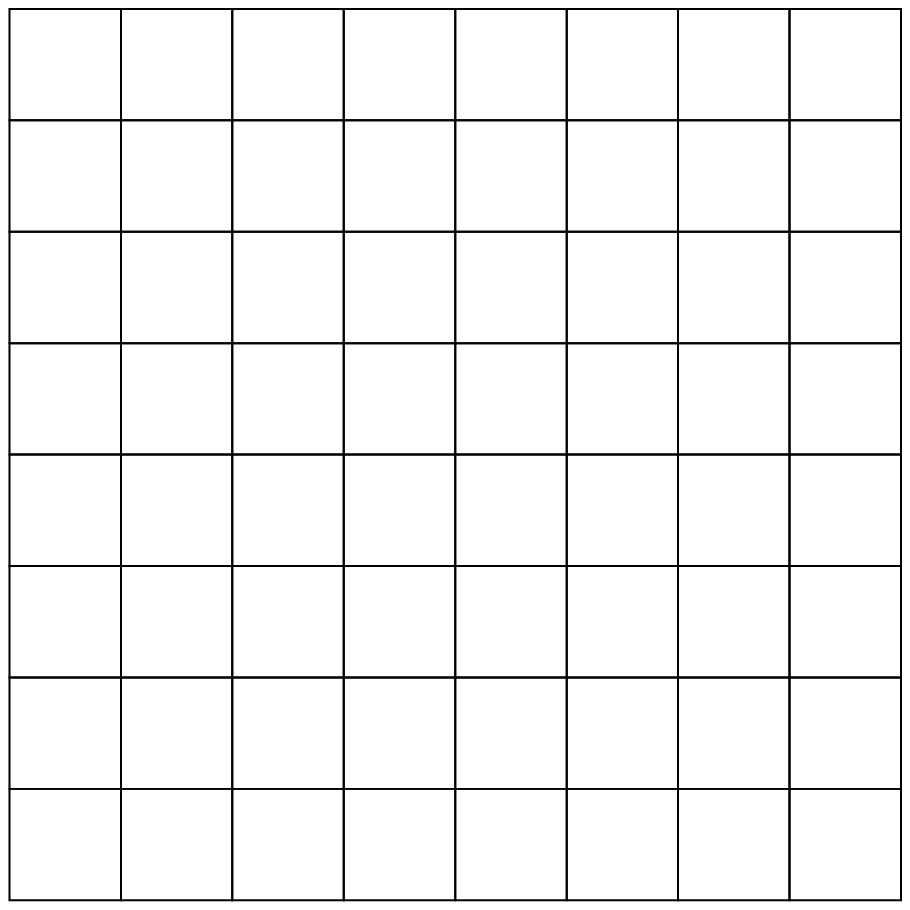}
	\includegraphics[width = 0.25\textwidth]{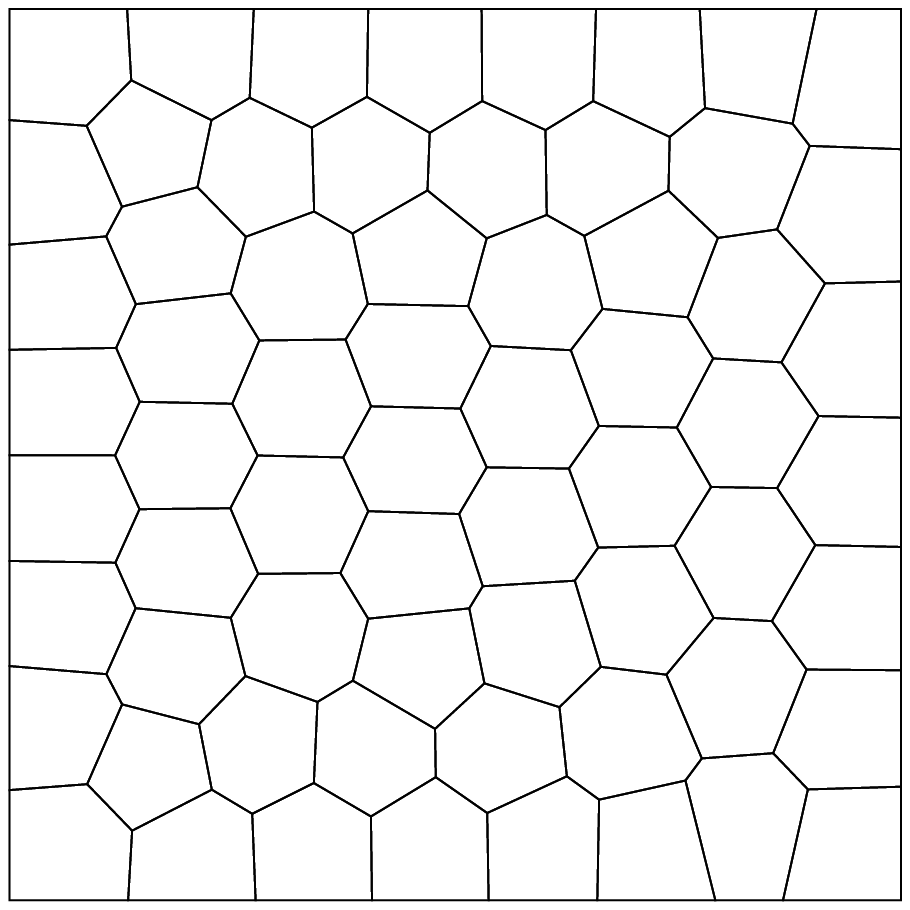} \\
	\includegraphics[width = 0.25\textwidth]{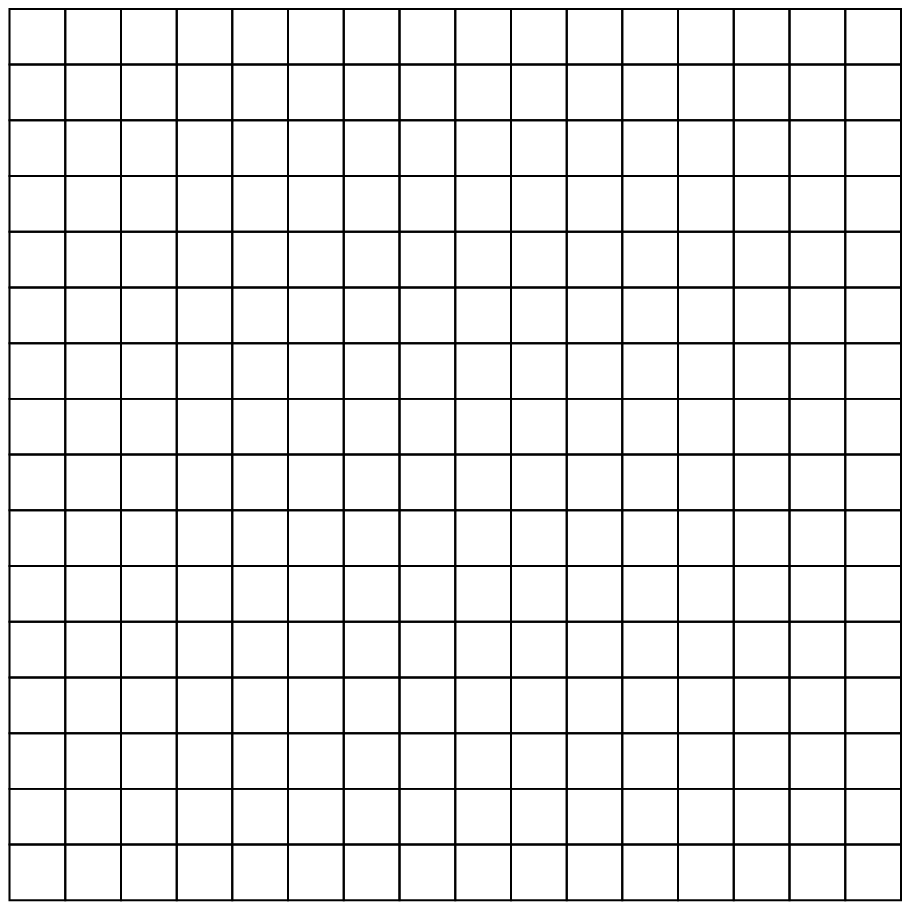}
	\includegraphics[width = 0.25\textwidth]{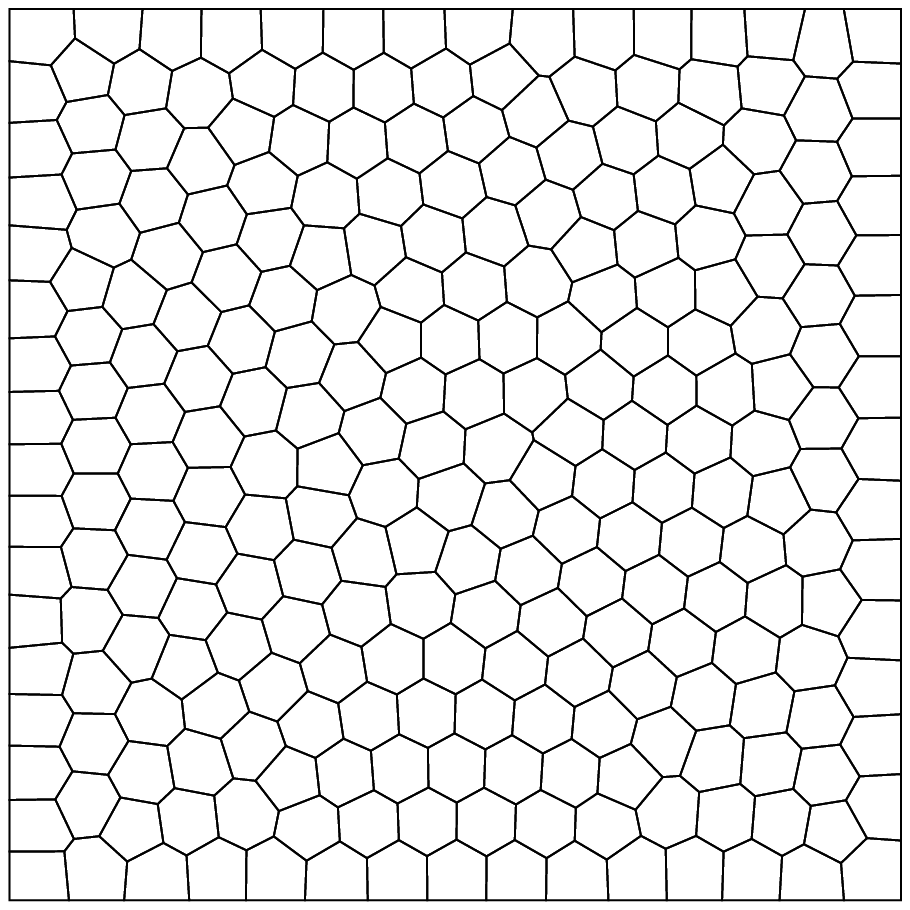}
	\caption{The meshes M1 (left) and M2 (right).}
	\label{fig:mesh}
\end{figure}

\begin{example}[Circular interface]\label{ex:CirInterface}
Take a circle centered at $(0.5,0.5)$ with radius $r_0 = 0.4$ as an interface, and choose the following exact solution
\begin{dis}
u(x,y) = \left\{\begin{array}{ll}
\frac{1}{\beta^+}(r^2 - r_0^2)^3 & \tr{if $(x,y)\in \Omega^+$}, \\
\frac{1}{\beta^-}(r^2 - r_0^2)^3 & \tr{if $(x,y)\in \Omega^-$},
\end{array}\right.
\end{dis}
where $\beta^+$ and $\beta^-$ are constants and $r = \sqrt{(x-0.5)^2 + (y-0.5)^2}$. In this example, we consider two cases when $(\beta^+,\beta^-) = (1,10)$, $(10,1)$, $(1,1000)$, and $(1000,1)$. The results are reported in \Crefrange{tab:Ex11}{tab:Ex14}.
\end{example}

\begin{example}[Sharp edge]\label{ex:Sharp}
In this example, we consider an interface with sharp edge. Let $L(x,y) = -(2y-1)^2 + ((2x - 2)\tan\theta)^2(2x-1)$ be the level-set function, with $\theta = 10^\circ$, and
\begin{dis}
	\Gamma = \{(x,y)\in\Omega : L(x,y) = 0\}, \quad \Omega^+ = \{(x,y)\in\Omega: L(x,y) > 0\}, \quad \Omega^- = \{(x,y)\in\Omega: L(x,y) < 0\}.
\end{dis}
Then the interface $\Gamma$ has a sharp corner at $(1,0.5)$ (see \Cref{fig:SharpEdge}). The exact solution is chosen as $u = L/\beta$, where $\beta = 1000$ on $\Omega^+$ and $\beta = 1$ on $\Omega^-$. The results are reported in \Cref{tab:Ex2}.
\end{example}

\begin{example}[Variable coefficient]\label{ex:Variable}
In this example, we take the level set of $L(x,y) = (x-0.5)^2/r_1^2 + (y-0.5)^2/r_2^2 - 1$ with $r_1 = 0.25$ and $r_2 = 0.125$ as an interface, that is, we set
\begin{dis}
	\Gamma = \{(x,y)\in\Omega : L(x,y) = 0\}, \quad \Omega^+ = \{(x,y)\in\Omega: L(x,y) > 0\}, \quad \Omega^- = \{(x,y)\in\Omega: L(x,y) < 0\}.
\end{dis}
The exact solution is chosen as $u = L/\beta$, where
\begin{dis}
	\beta(x,y) = \left\{\begin{array}{ll}
		1 & \tr{if $(x,y)\in \Omega^+$}, \\
		1 + 0.5(2x-1)^2 - (2x-1)(2y-1) + (2y-1)^2 & \tr{if $(x,y)\in \Omega^-$}.
	\end{array}\right.
\end{dis}
The results are reported in \Cref{tab:Ex3}.
\end{example}

\begin{table}[]
\caption{The errors for \Cref{ex:CirInterface} with $(\beta^+,\beta^-) = (1,10)$.}
\label{tab:Ex11}
{\footnotesize
\renewcommand{\arraystretch}{1.2}
	\begin{tabular}{|c|cccc|cccc|}
		\hline
		\multirow{2}{*}{$1/h$} & \multicolumn{4}{c|}{M1}                                                                                               & \multicolumn{4}{c|}{M2}                                                                                               \\ \cline{2-9}
		& \multicolumn{1}{c|}{$|u_h - Q_hu|_{1,h}$} & \multicolumn{1}{c|}{Order}  & \multicolumn{1}{c|}{$\|u_0 - Q_0u\|_{0,\Omega}$} & Order  & \multicolumn{1}{c|}{$|u_h - Q_hu|_{1,h}$} & \multicolumn{1}{c|}{Order}  & \multicolumn{1}{c|}{$\|u_0 - Q_0u\|_{0,\Omega}$} & Order  \\ \hline
		$2^3$                  & \multicolumn{1}{c|}{5.4587e-02}           & \multicolumn{1}{c|}{}       & \multicolumn{1}{c|}{3.3027e-03}                  &        & \multicolumn{1}{c|}{5.5979e-02}           & \multicolumn{1}{c|}{}       & \multicolumn{1}{c|}{3.3820e-03}                  &        \\ \hline
		$2^4$                  & \multicolumn{1}{c|}{2.6417e-02}           & \multicolumn{1}{c|}{1.0471} & \multicolumn{1}{c|}{8.7554e-04}                  & 1.9154 & \multicolumn{1}{c|}{2.7216e-02}           & \multicolumn{1}{c|}{1.0404} & \multicolumn{1}{c|}{8.8878e-04}                  & 1.9280 \\ \hline
		$2^5$                  & \multicolumn{1}{c|}{1.3137e-02}           & \multicolumn{1}{c|}{1.0079} & \multicolumn{1}{c|}{2.2319e-04}                  & 1.9719 & \multicolumn{1}{c|}{1.3755e-02}           & \multicolumn{1}{c|}{0.9845} & \multicolumn{1}{c|}{2.2742e-04}                  & 1.9665 \\ \hline
		$2^6$                  & \multicolumn{1}{c|}{6.5602e-03}           & \multicolumn{1}{c|}{1.0018} & \multicolumn{1}{c|}{5.6112e-05}                  & 1.9919 & \multicolumn{1}{c|}{6.9372e-03}           & \multicolumn{1}{c|}{0.9876} & \multicolumn{1}{c|}{5.6936e-05}                  & 1.9979 \\ \hline
		$2^7$                  & \multicolumn{1}{c|}{3.2792e-03}           & \multicolumn{1}{c|}{1.0004} & \multicolumn{1}{c|}{1.4049e-05}                  & 1.9979 & \multicolumn{1}{c|}{3.4921e-03}           & \multicolumn{1}{c|}{0.9903} & \multicolumn{1}{c|}{1.4224e-05}                  & 2.0011 \\ \hline
		$2^8$                  & \multicolumn{1}{c|}{1.6395e-03}           & \multicolumn{1}{c|}{1.0001} & \multicolumn{1}{c|}{3.5135e-06}                  & 1.9995 & \multicolumn{1}{c|}{1.7500e-03}           & \multicolumn{1}{c|}{0.9967} & \multicolumn{1}{c|}{3.5427e-06}                  & 2.0054 \\ \hline
	\end{tabular}
}\vspace*{0.5\baselineskip}
\end{table}

\begin{table}[]
	\caption{The errors for \Cref{ex:CirInterface} with $(\beta^+,\beta^-) = (10,1)$.}
	\label{tab:Ex12}
{\footnotesize
\renewcommand{\arraystretch}{1.2}
\begin{tabular}{|c|cccc|cccc|}
	\hline
	\multirow{2}{*}{$1/h$} & \multicolumn{4}{c|}{M1}                                                                                               & \multicolumn{4}{c|}{M2}                                                                                               \\ \cline{2-9}
	& \multicolumn{1}{c|}{$|u_h - Q_hu|_{1,h}$} & \multicolumn{1}{c|}{Order}  & \multicolumn{1}{c|}{$\|u_0 - Q_0u\|_{0,\Omega}$} & Order  & \multicolumn{1}{c|}{$|u_h - Q_hu|_{1,h}$} & \multicolumn{1}{c|}{Order}  & \multicolumn{1}{c|}{$\|u_0 - Q_0u\|_{0,\Omega}$} & Order  \\ \hline
	$2^3$                  & \multicolumn{1}{c|}{5.8617e-02}           & \multicolumn{1}{c|}{}       & \multicolumn{1}{c|}{3.4872e-03}                  &        & \multicolumn{1}{c|}{6.0526e-02}           & \multicolumn{1}{c|}{}       & \multicolumn{1}{c|}{3.6934e-03}                  &        \\ \hline
	$2^4$                  & \multicolumn{1}{c|}{2.8290e-02}           & \multicolumn{1}{c|}{1.0510} & \multicolumn{1}{c|}{9.3409e-04}                  & 1.9004 & \multicolumn{1}{c|}{2.9512e-02}           & \multicolumn{1}{c|}{1.0362} & \multicolumn{1}{c|}{9.7713e-04}                  & 1.9183 \\ \hline
	$2^5$                  & \multicolumn{1}{c|}{1.4189e-02}           & \multicolumn{1}{c|}{0.9955} & \multicolumn{1}{c|}{2.4360e-04}                  & 1.9391 & \multicolumn{1}{c|}{1.5092e-02}           & \multicolumn{1}{c|}{0.9676} & \multicolumn{1}{c|}{2.4927e-04}                  & 1.9708 \\ \hline
	$2^6$                  & \multicolumn{1}{c|}{7.1176e-03}           & \multicolumn{1}{c|}{0.9953} & \multicolumn{1}{c|}{6.2122e-05}                  & 1.9713 & \multicolumn{1}{c|}{7.7258e-03}           & \multicolumn{1}{c|}{0.9660} & \multicolumn{1}{c|}{6.2438e-05}                  & 1.9972 \\ \hline
	$2^7$                  & \multicolumn{1}{c|}{3.5635e-03}           & \multicolumn{1}{c|}{0.9981} & \multicolumn{1}{c|}{1.5643e-05}                  & 1.9896 & \multicolumn{1}{c|}{3.9414e-03}           & \multicolumn{1}{c|}{0.9710} & \multicolumn{1}{c|}{1.5536e-05}                  & 2.0068 \\ \hline
	$2^8$                  & \multicolumn{1}{c|}{1.7825e-03}           & \multicolumn{1}{c|}{0.9994} & \multicolumn{1}{c|}{3.9192e-06}                  & 1.9969 & \multicolumn{1}{c|}{1.9947e-03}           & \multicolumn{1}{c|}{0.9825} & \multicolumn{1}{c|}{3.8632e-06}                  & 2.0078 \\ \hline
\end{tabular}
}\vspace*{0.5\baselineskip}
\end{table}

\begin{table}[]
	\caption{The errors for \Cref{ex:CirInterface} with $(\beta^+,\beta^-) = (1,1000)$.}
	\label{tab:Ex13}
	{\footnotesize
		\renewcommand{\arraystretch}{1.2}
		\begin{tabular}{|c|cccc|cccc|}
			\hline
			\multirow{2}{*}{$1/h$} & \multicolumn{4}{c|}{M1}                                                                                               & \multicolumn{4}{c|}{M2}                                                                                               \\ \cline{2-9}
			& \multicolumn{1}{c|}{$|u_h - Q_hu|_{1,h}$} & \multicolumn{1}{c|}{Order}  & \multicolumn{1}{c|}{$\|u_0 - Q_0u\|_{0,\Omega}$} & Order  & \multicolumn{1}{c|}{$|u_h - Q_hu|_{1,h}$} & \multicolumn{1}{c|}{Order}  & \multicolumn{1}{c|}{$\|u_0 - Q_0u\|_{0,\Omega}$} & Order  \\ \hline
			$2^3$                  & \multicolumn{1}{c|}{5.5356e-02}           & \multicolumn{1}{c|}{}       & \multicolumn{1}{c|}{3.2533e-03}                  &        & \multicolumn{1}{c|}{5.8561e-02}           & \multicolumn{1}{c|}{}       & \multicolumn{1}{c|}{3.3911e-03}                  &        \\ \hline
			$2^4$                  & \multicolumn{1}{c|}{2.6461e-02}           & \multicolumn{1}{c|}{1.0649} & \multicolumn{1}{c|}{8.7488e-04}                  & 1.8947 & \multicolumn{1}{c|}{2.7524e-02}           & \multicolumn{1}{c|}{1.0893} & \multicolumn{1}{c|}{8.8967e-04}                  & 1.9304 \\ \hline
			$2^5$                  & \multicolumn{1}{c|}{1.3148e-02}           & \multicolumn{1}{c|}{1.0090} & \multicolumn{1}{c|}{2.2320e-04}                  & 1.9707 & \multicolumn{1}{c|}{1.3777e-02}           & \multicolumn{1}{c|}{0.9984} & \multicolumn{1}{c|}{2.2758e-04}                  & 1.9669 \\ \hline
			$2^6$                  & \multicolumn{1}{c|}{6.5645e-03}           & \multicolumn{1}{c|}{1.0021} & \multicolumn{1}{c|}{5.6113e-05}                  & 1.9919 & \multicolumn{1}{c|}{6.9396e-03}           & \multicolumn{1}{c|}{0.9893} & \multicolumn{1}{c|}{5.6970e-05}                  & 1.9981 \\ \hline
			$2^7$                  & \multicolumn{1}{c|}{3.2800e-03}           & \multicolumn{1}{c|}{1.0010} & \multicolumn{1}{c|}{1.4049e-05}                  & 1.9978 & \multicolumn{1}{c|}{3.4929e-03}           & \multicolumn{1}{c|}{0.9904} & \multicolumn{1}{c|}{1.4231e-05}                  & 2.0011 \\ \hline
			$2^8$                  & \multicolumn{1}{c|}{1.6398e-03}           & \multicolumn{1}{c|}{1.0002} & \multicolumn{1}{c|}{3.5137e-06}                  & 1.9994 & \multicolumn{1}{c|}{1.7504e-03}           & \multicolumn{1}{c|}{0.9968} & \multicolumn{1}{c|}{3.5445e-06}                  & 2.0054 \\ \hline
		\end{tabular}
	}\vspace*{0.5\baselineskip}
\end{table}

\begin{table}[]
	\caption{The errors for \Cref{ex:CirInterface} with $(\beta^+,\beta^-) = (1000,1)$.}
	\label{tab:Ex14}
	{\footnotesize
		\renewcommand{\arraystretch}{1.2}
		\begin{tabular}{|c|cccc|cccc|}
			\hline
			\multirow{2}{*}{$1/h$} & \multicolumn{4}{c|}{M1}                                                                                               & \multicolumn{4}{c|}{M2}                                                                                               \\ \cline{2-9}
			& \multicolumn{1}{c|}{$|u_h - Q_hu|_{1,h}$} & \multicolumn{1}{c|}{Order}  & \multicolumn{1}{c|}{$\|u_0 - Q_0u\|_{0,\Omega}$} & Order  & \multicolumn{1}{c|}{$|u_h - Q_hu|_{1,h}$} & \multicolumn{1}{c|}{Order}  & \multicolumn{1}{c|}{$\|u_0 - Q_0u\|_{0,\Omega}$} & Order  \\ \hline
			$2^3$                  & \multicolumn{1}{c|}{1.6219e-01}           & \multicolumn{1}{c|}{}       & \multicolumn{1}{c|}{3.4747e-03}                  &        & \multicolumn{1}{c|}{7.4038e-02}           & \multicolumn{1}{c|}{}       & \multicolumn{1}{c|}{3.7966e-03}                  &        \\ \hline
			$2^4$                  & \multicolumn{1}{c|}{2.9224e-02}           & \multicolumn{1}{c|}{2.4725} & \multicolumn{1}{c|}{8.7313e-04}                  & 1.9926 & \multicolumn{1}{c|}{3.1237e-02}           & \multicolumn{1}{c|}{1.2450} & \multicolumn{1}{c|}{9.9624e-04}                  & 1.9301 \\ \hline
			$2^5$                  & \multicolumn{1}{c|}{1.4104e-02}           & \multicolumn{1}{c|}{1.0511} & \multicolumn{1}{c|}{2.2151e-04}                  & 1.9788 & \multicolumn{1}{c|}{1.5075e-02}           & \multicolumn{1}{c|}{1.0512} & \multicolumn{1}{c|}{2.5707e-04}                  & 1.9543 \\ \hline
			$2^6$                  & \multicolumn{1}{c|}{7.0375e-03}           & \multicolumn{1}{c|}{1.0029} & \multicolumn{1}{c|}{5.6761e-05}                  & 1.9644 & \multicolumn{1}{c|}{7.5627e-03}           & \multicolumn{1}{c|}{0.9951} & \multicolumn{1}{c|}{6.4711e-05}                  & 1.9901 \\ \hline
			$2^7$                  & \multicolumn{1}{c|}{3.5397e-03}           & \multicolumn{1}{c|}{0.9914} & \multicolumn{1}{c|}{1.4787e-05}                  & 1.9406 & \multicolumn{1}{c|}{3.7960e-03}           & \multicolumn{1}{c|}{0.9944} & \multicolumn{1}{c|}{1.6118e-05}                  & 2.0053 \\ \hline
			$2^8$                  & \multicolumn{1}{c|}{1.7847e-03}           & \multicolumn{1}{c|}{0.9879} & \multicolumn{1}{c|}{3.8351e-06}                  & 1.9470 & \multicolumn{1}{c|}{1.9126e-03}           & \multicolumn{1}{c|}{0.9890} & \multicolumn{1}{c|}{3.9855e-06}                  & 2.0159 \\ \hline
		\end{tabular}
	}\vspace*{0.5\baselineskip}
\end{table}

\begin{table}[]
		\caption{The errors for \Cref{ex:Sharp}.}
	\label{tab:Ex2}
	{\footnotesize
	\renewcommand{\arraystretch}{1.2}
	\begin{tabular}{|c|cccc|cccc|}
		\hline
		\multirow{2}{*}{$1/h$} & \multicolumn{4}{c|}{M1}                                                                                               & \multicolumn{4}{c|}{M2}                                                                                               \\ \cline{2-9}
		& \multicolumn{1}{c|}{$|u_h - Q_hu|_{1,h}$} & \multicolumn{1}{c|}{Order}  & \multicolumn{1}{c|}{$\|u_0 - Q_0u\|_{0,\Omega}$} & Order  & \multicolumn{1}{c|}{$|u_h - Q_hu|_{1,h}$} & \multicolumn{1}{c|}{Order}  & \multicolumn{1}{c|}{$\|u_0 - Q_0u\|_{0,\Omega}$} & Order  \\ \hline
		$2^3$                  & \multicolumn{1}{c|}{6.0360e-01}           & \multicolumn{1}{c|}{}       & \multicolumn{1}{c|}{3.0913e-02}                  &        & \multicolumn{1}{c|}{5.6648e-01}           & \multicolumn{1}{c|}{}       & \multicolumn{1}{c|}{3.2621e-02}                  &        \\ \hline
		$2^4$                  & \multicolumn{1}{c|}{3.0771e-01}           & \multicolumn{1}{c|}{0.9720} & \multicolumn{1}{c|}{7.1992e-03}                  & 2.1023 & \multicolumn{1}{c|}{2.8813e-01}           & \multicolumn{1}{c|}{0.9753} & \multicolumn{1}{c|}{8.0180e-03}                  & 2.0245 \\ \hline
		$2^5$                  & \multicolumn{1}{c|}{1.6788e-01}           & \multicolumn{1}{c|}{0.8742} & \multicolumn{1}{c|}{1.7187e-03}                  & 2.0665 & \multicolumn{1}{c|}{1.6058e-01}           & \multicolumn{1}{c|}{0.8434} & \multicolumn{1}{c|}{1.9645e-03}                  & 2.0291 \\ \hline
		$2^6$                  & \multicolumn{1}{c|}{8.3267e-02}           & \multicolumn{1}{c|}{1.0116} & \multicolumn{1}{c|}{4.2052e-04}                  & 2.0310 & \multicolumn{1}{c|}{7.6565e-02}           & \multicolumn{1}{c|}{1.0686} & \multicolumn{1}{c|}{4.7390e-04}                  & 2.0515 \\ \hline
		$2^7$                  & \multicolumn{1}{c|}{3.9446e-02}           & \multicolumn{1}{c|}{1.0779} & \multicolumn{1}{c|}{1.0327e-04}                  & 2.0258 & \multicolumn{1}{c|}{3.5919e-02}           & \multicolumn{1}{c|}{1.0919} & \multicolumn{1}{c|}{1.0916e-04}                  & 2.1181 \\ \hline
		$2^8$                  & \multicolumn{1}{c|}{1.9396e-02}           & \multicolumn{1}{c|}{1.0241} & \multicolumn{1}{c|}{2.5529e-05}                  & 2.0162 & \multicolumn{1}{c|}{1.7581e-02}           & \multicolumn{1}{c|}{1.0308} & \multicolumn{1}{c|}{2.6011e-05}                  & 2.0693 \\ \hline
	\end{tabular}
}\vspace*{0.5\baselineskip}
\end{table}

\begin{table}[]
	\caption{The errors for \Cref{ex:Variable}.}
	\label{tab:Ex3}
	{\footnotesize
	\renewcommand{\arraystretch}{1.2}
	\begin{tabular}{|c|cccc|cccc|}
		\hline
		\multirow{2}{*}{$1/h$} & \multicolumn{4}{c|}{M1}                                                                                               & \multicolumn{4}{c|}{M2}                                                                                               \\ \cline{2-9}
		& \multicolumn{1}{c|}{$|u_h - Q_hu|_{1,h}$} & \multicolumn{1}{c|}{Order}  & \multicolumn{1}{c|}{$\|u_0 - Q_0u\|_{0,\Omega}$} & Order  & \multicolumn{1}{c|}{$|u_h - Q_hu|_{1,h}$} & \multicolumn{1}{c|}{Order}  & \multicolumn{1}{c|}{$\|u_0 - Q_0u\|_{0,\Omega}$} & Order  \\ \hline
		$2^3$                  & \multicolumn{1}{c|}{1.0067e+01}           & \multicolumn{1}{c|}{}       & \multicolumn{1}{c|}{5.8821e-01}                  &        & \multicolumn{1}{c|}{9.7421e+00}           & \multicolumn{1}{c|}{}       & \multicolumn{1}{c|}{6.1556e-01}                  &        \\ \hline
		$2^4$                  & \multicolumn{1}{c|}{5.0872e+00}           & \multicolumn{1}{c|}{0.9847} & \multicolumn{1}{c|}{1.4782e-01}                  & 1.9925 & \multicolumn{1}{c|}{4.8530e+00}           & \multicolumn{1}{c|}{1.0054} & \multicolumn{1}{c|}{1.5612e-01}                  & 1.9793 \\ \hline
		$2^5$                  & \multicolumn{1}{c|}{2.5514e+00}           & \multicolumn{1}{c|}{0.9956} & \multicolumn{1}{c|}{3.7083e-02}                  & 1.9950 & \multicolumn{1}{c|}{2.4251e+00}           & \multicolumn{1}{c|}{1.0009} & \multicolumn{1}{c|}{3.9536e-02}                  & 1.9814 \\ \hline
		$2^6$                  & \multicolumn{1}{c|}{1.2772e+00}           & \multicolumn{1}{c|}{0.9983} & \multicolumn{1}{c|}{9.2748e-03}                  & 1.9994 & \multicolumn{1}{c|}{1.2119e+00}           & \multicolumn{1}{c|}{1.0008} & \multicolumn{1}{c|}{9.9313e-03}                  & 1.9931 \\ \hline
		$2^7$                  & \multicolumn{1}{c|}{6.3883e-01}           & \multicolumn{1}{c|}{0.9995} & \multicolumn{1}{c|}{2.3199e-03}                  & 1.9992 & \multicolumn{1}{c|}{6.0612e-01}           & \multicolumn{1}{c|}{0.9995} & \multicolumn{1}{c|}{2.4873e-03}                  & 1.9974 \\ \hline
		$2^8$                  & \multicolumn{1}{c|}{3.1948e-01}           & \multicolumn{1}{c|}{0.9997} & \multicolumn{1}{c|}{5.8000e-04}                  & 2.0000 & \multicolumn{1}{c|}{3.0290e-01}           & \multicolumn{1}{c|}{1.0007} & \multicolumn{1}{c|}{6.2143e-04}                  & 2.0009 \\ \hline
	\end{tabular}
}\vspace*{0.5\baselineskip}
\end{table}

\begin{figure}
\begin{center}
	\begin{tikzpicture}
		\draw [thick] (-2,-2) rectangle (2,2);
		\node at (0.5,0) {$\Omega^-$};
		\node at (-1,0) {$\Omega^+$};
		\node [above left] at (0.25,0.5) {$\Gamma$};
		\draw [thick, domain=0:2, samples=101, smooth, variable=\x] plot ({\x}, {(2*(1 - 0.5*\x)*tan(45))*sqrt(0.5*\x)});
		\draw [thick, domain=0:2, samples=101, smooth, variable=\x] plot ({\x}, {(-2*(1 - 0.5*\x)*tan(45))*sqrt(0.5*\x)});
		\node [left] at (2-0.4,0) {$2\theta$};
		\draw [densely dashed] (2,0) -- (2 - 1.25, 1.25);
		\draw [densely dashed] (2,0) -- (2 - 1.25, -1.25);
		\draw [densely dashed] (2-0.3,0.3) arc (135:225:0.3*1.4142);
	\end{tikzpicture}
\vspace*{0.25\baselineskip}
	\caption{The level set of $(2y-1)^2 = ((2x-2)\tan\theta)^2(2x-1)$.}
	\label{fig:SharpEdge}
\end{center}
\end{figure}
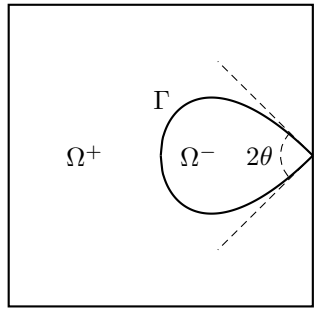

\section{Conclusion}

We introduce an immersed WG method for the elliptic interface problems on general unfitted polygonal meshes. The discrete space consists of constant functions on the mesh edges and piecewise linear functions in the mesh elements, satisfying the interface conditions. We prove an optimal-order convergence in the discrete $H^1$-seminorm under some assumptions on the exact solution.

\bibliographystyle{siam}
\bibliography{refs}

\end{document}